\def\BibTeX{{\rm B\kern-.05em{\sc i\kern-.025em b}\kern-.08em
    T\kern-.1667em\lower.7ex\hbox{E}\kern-.125emX}}
\numberwithin{equation}{section}
\newcommand{\R}{\mathbb{R}}
\newcommand{\N}{\mathbb{N}}
\newcommand{\e}{\varepsilon}
\newcommand{\1}{\mathbbm{1}}
\newtheorem{Theorem}{Theorem}[section]
\newtheorem{Proposition}[Theorem]{Proposition}
\newtheorem{Lemma}[Theorem]{Lemma}
\newtheorem{Remark}[Theorem]{Remark}
\newtheorem{Definition}[Theorem]{Definition}
\begin{document}

\title[On uniqueness and stability for the Enskog equation]{On uniqueness and stability for the Enskog equation}

\author{Martin Friesen}
\address[Martin Friesen]{Faculty of  Mathematics und Natural Sciences, University of Wuppertal, Gau\ss stra\ss e 20, 42119 Wuppertal}
\email[Martin Friesen]{friesen@math.uni-wuppertal.de}

\author{Barbara R\"udiger}
\address[Barbara R\"udiger]{Faculty of  Mathematics und Natural Sciences, University of Wuppertal, Gau\ss stra\ss e 20, 42119 Wuppertal}
\email[Barbara R\"udiger]{ruediger@uni-wuppertal.de}

\author{Padmanabhan Sundar}
\address[Padmanabhan Sundar]{Department of Mathematics, Louisiana State University, Baton Rouge, Louisiana 70803, USA}
\email[Padmanabhan Sundar]{psundar@math.lsu.edu}

\date{\today}

\subjclass[2010]{Primary 35Q20; Secondary 76P05; 76N10; 60H30}

\keywords{Enskog equation; Wasserstein-distance; uniqueness; stability}

\begin{abstract}
The time-evolution of a moderately dense gas in a vacuum is described in classical mechanics by a particle density function obtained from the Enskog equation.
Based on a McKean-Vlasov stochastic equation with jumps, the associated stochastic process was recently studied in \cite{ARS17}. The latter work was extended in \cite{FRS18} to the case of general hard and soft potentials without Grad's angular cut-off assumption.
By the introduction of a shifted distance that exactly compensates for the free transport term that accrues in the spatially inhomogeneous setting,
we prove in this work an inequality on the Wasserstein distance for any two measure-valued solutions to the Enskog equation. As a particular consequence, we find sufficient conditions for the uniqueness and continuous-dependence on initial data for solutions to the Enskog equation applicable to hard and soft potentials without angular cut-off.
\end{abstract}

\maketitle

\allowdisplaybreaks

\section{Introduction}

\subsection{The Boltzmann-Enskog model}
In the classical description of a moderately dense gas in a vacuum, each particle is completely described by its position $r \in \R^d$ and its velocity $v \in \R^d$, where $d \geq 3$. Moreover, the particles are assumed to be indistinguishable and with equal mass.
Any particle $(r,v)$ moves with constant speed $v$ until it performs a collision with another particle $(q,u)$.
Denote by $v^{\star},u^{\star}$ the resulting velocities after collision.
We suppose that collisions are elastic, as a consequence conservation of momentum and kinetic energy hold, i.e.
\begin{align*}
  u + v &= u^{\star} + v^{\star} \\ |u|^2 + |v|^2 &= |u^{\star}|^2 + |v^{\star}|^2.
\end{align*}
A commonly used parameterization of the deflected velocities $v^{\star},u^{\star}$ is given by the unit vector $n = \frac{v^{\star} - v}{|v^{\star} - v|}$ via
\begin{align}\label{PARA}
 \begin{cases} v^{\star} &= v + (u-v,n)n \\ u^{\star} &= u - (u-v,n)n \end{cases}, \ \ n \in S^{d-1},
\end{align}
where $(\cdot,\cdot)$ denotes the euclidean product in $\R^d$.
Note that, for fixed $n \in S^{d-1}$, the change of variables $(v,u) \longmapsto (v^{\star}, u^{\star})$
is an involutive transformation with Jacobian equal to $1$.

Let $f_0(r,v) \geq 0$ be the particle density function of the gas at initial time $t = 0$.
The time evolution $f_t = f_t(r,v)$ is then obtained from the (Boltzmann-)Enskog equation
\begin{align}\label{EQ:03}
 \frac{\partial f_t}{\partial t} + v \cdot (\nabla_r f_t) = \mathcal{Q}(f_t,f_t), \ \ f_t|_{t=0} = f_0, \ \ t > 0.
\end{align}
Here $\mathcal{Q}$ is a non-local, nonlinear collision integral operator given by
\begin{align}\label{CINT}
 \notag &\ \mathcal{Q}(f_t,f_t)(r,v) 
 \\ &= \int_{\R^{2d}}\int_{S^{d-1}}\left( f_t(r,v^{\star})f_t(q,u^{\star}) - f_t(r,v)f_t(q,u)\right)\beta(r-q)B(|v-u|,n)dn du dq,
\end{align}
where $dn$ denotes the Lebesgue surface measure on the sphere $S^{d-1}$ and $B(|v-u|, n) \geq 0$ the collision kernel 
so that $B(|v-u|,n)dn$ includes the effect of velocity cross-section. The particular form of $B(|v-u|,n)$ depends on the particular microscopic model one has in mind,
while $\beta(r-q) \geq 0$ describes the rate at which a particle at position $r$ performs a collision with another particle at position $q$. For instance, $\beta(r-q) = \delta_0(r-q)$ describes the case of local collisions governed by the classical Boltzmann equation, while the particular choice $\beta(r-q) = \delta_{\rho}(|r-q|)$ describes the case where particles behave like billiard balls of radius $\rho > 0$ and was studied by Rezakhanlou \cite{R03}. Following \cite{ARS17} and \cite{FRS18} we study in this work the case where $\beta$ is a symmetric and smooth function. Applications, additional physical background and classical mathematical results are collected in the books of Cercignani \cite{C88} and Cercignani, Illner, Pulvirenti \cite{CIP94}. For recent review articles on this topic we refer to Villani \cite{V02} and Alexandre \cite{A09}.

\subsection{Examples in dimension $d = 3$}
Let us briefly comment on particular examples of collision kernels $B(|v-u|,n)$ in dimension $d = 3$.
Boltzmann's original model was first formulated for (true) hard spheres, i.e. $B(|v-u|,n) = (u-v,n)$. 
A transformation in polar coordinates to a system where the center is in 
$\frac{u+v}{2}$ and $e_3 = (0,0,1)$ is parallel to $u-v$, i.e.
$|u-v|e_3 = u-v$ leads to
\begin{align}\label{boltzmann model}
B(|v-u|,n)dn = |(v-u,n)| = |v-u|\sin\left( \frac{\theta}{2}\right)\cos\left( \frac{\theta}{2}\right)d\theta d\phi,
\end{align}
where $\theta \in (0,\pi]$ is the angle between $u-v$ and $u^{\star} - v^{\star}$ and $\phi \in (0,2\pi]$ is the longitude angle, see Tanaka \cite{T78} or Horowitz and Karandikar \cite{HK90}. This is summarized in Figure 1.
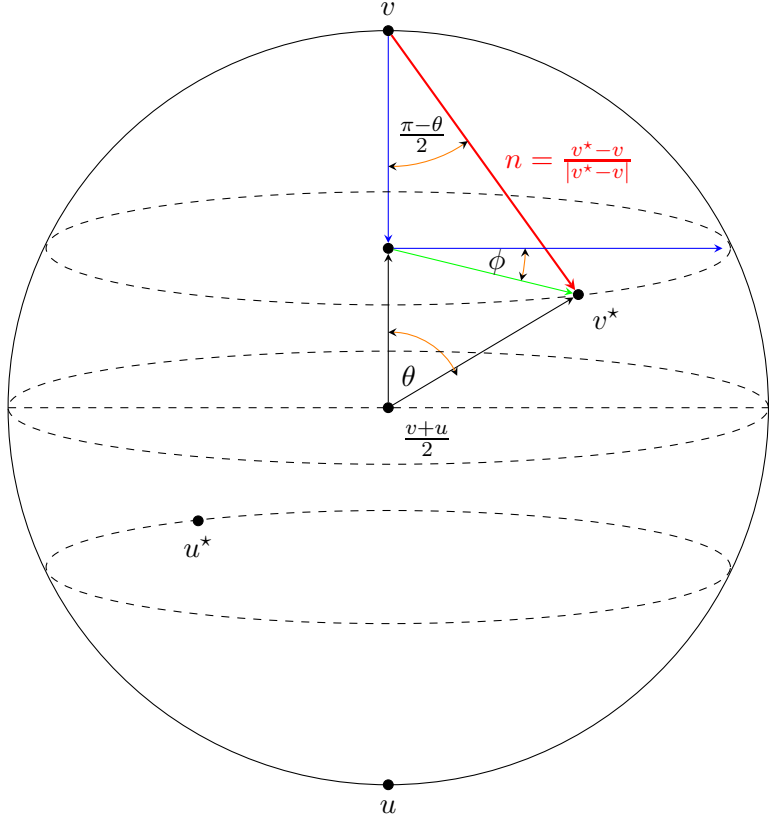
\begin{figure}
\begin{center}
\begin{tikzpicture}
  [
    scale=5,
    >=stealth,
    point/.style = {draw, circle,  fill = black, inner sep = 1.3pt},
    dot/.style   = {draw, circle,  fill = black, inner sep = .2pt},
  ]

  \def \rad{1}
  \node (origin) at (0,0) 
  [point, label = {below right:$\frac{v+u}{2}$}]{};;
  \draw (origin) circle (\rad);

  \node (n1) at +(90:\rad) [point, label = above:$v$] {};
  \node (n2) at +(-90:\rad) [point, label = below:$u$] {};
  
  \node (n3) at +(25:\rad)  {};
  \node (n7) at +(-25:\rad)  {};
  \node (n6) at (0.5,0.3) [point, label = {below right:$v^{\star}$}] {};
  
  \node (n4) at (-0.5, -0.3) [point, label = {below:$u^{\star}$}] {};
  
  \node (n5) at (n3-|0,1) [point] {};
  
  \draw[thick, ->, red] (n1) -- node (a) [label = {right:$n = \frac{v^{\star} - v}{|v^{\star} - v|}$}] {} (n6);
    
  \draw[->, green] (n5) -- (n6);
  \draw[->, blue] (n5) -- (n3);
  \draw[->, blue] (n1) -- (n5);
  \draw 
  pic["$\phi$", draw=orange, <->, angle eccentricity=0.8, angle radius=1.8cm] {angle= n6--n5--n3};
  
  \draw 
  pic["$\frac{\pi - \theta}{2}$", draw=orange, <->, angle eccentricity=0.8, angle radius=1.8cm] {angle= n5--n1--n6};
    
  \draw[->] (origin) -- (n6);
  \draw[->] (origin) -- (n5);
  \draw
   pic["$\theta$", draw=orange, <->, angle eccentricity=0.5, angle radius=1.0cm] {angle= n3--origin--n1};
   
  \draw[dashed] (n3-|0,0) ellipse (0.9 and 0.15);
  \draw[dashed] (n7-|0,0) ellipse (0.9 and 0.15);
  \draw[dashed] (origin) ellipse (1 and 0.15);
  
  \draw[dashed] (-1,0) to (1,0);

\end{tikzpicture}
\end{center}
\caption{Parameterization of collisions}
\end{figure}
Note that Boltzmann's original model \eqref{boltzmann model} 
satisfies Grad's angular cut-off assumption, i.e.
\[
 \int_{S^{2}}B(|v-u|, n) dn < \infty.
\]
A mathematically more challenging class of models
which does not satisfy Grad's angular cut-off assumption
is provided by \textit{long-range interactions} given by
  \begin{align}\label{EQ}
    B(|v-u|, n)dn = |v-u|^{\gamma} b(\theta)d\theta d\xi,
  \end{align}
  where $b$ is at least locally bounded on $(0,\pi]$ and
  \[
    b(\theta) \sim \theta^{-1 - \nu}, \ \ \theta \to 0^+, \  \ \nu \in (0,2).
  \]
  The parameters $\gamma$ and $\nu$ are related by
 \begin{align}\label{INTRO:00}
  \gamma = \frac{s - 5}{s-1}, \qquad \nu = \frac{2}{s-1}, \ \ \ s > 2.
 \end{align}
 For long-range interactions one distinguishes between the following cases:
 \begin{enumerate}
  \item[(i)] Very soft potentials $s \in (2,3]$, $\gamma \in (-3,-1]$ and $\nu \in [1,2)$.
  \item[(ii)] Soft potentials $s \in (3,5)$, $\gamma \in (-1, 0)$ and $\nu \in ( \frac{1}{2}, 1)$.
  \item[(iii)] Maxwellian molecules $s = 5$, $\gamma = 0$ and $\nu = \frac{1}{2}$.
  \item[(iv)] Hard potentials $s > 5$, $\gamma \in (0,1)$ and $\nu \in (0,\frac{1}{2})$.
 \end{enumerate}
  For additional details and comments we refer to \cite{V02} or \cite{A09}.
  Note that one has 
 \[
  \int_{0}^{\pi}b(\theta)d\theta = \infty \ \ \text{ but } \ \ \int_{0}^{\pi}\theta^2 b(\theta)d\theta < \infty.
 \]
  Hence $\mathcal{Q}$ is in this case a non-linear and singular integral operator with either unbounded or singular coefficients.
  A rigorous analysis of the corresponding Cauchy problem \eqref{EQ:03} is therefore a challenging mathematical task.

\section{Statement of the result}

\subsection{Different parameterization of collisions}
In order to study solutions to the Enskog equation it 
is feasible to find continuity properties 
of the deflected velocities $v^{\star}, u^{\star}$ when the incoming velocities $v,u$ are varied. 
Having in mind the case of long-range interactions \eqref{EQ} it is also feasible to parameterize $v^{\star}, u^{\star}$ in terms of the angle $\theta$, i.e. $n = n(v,u,\theta,\phi)$,
and hence study continuity properties of $(u-v,n)n$ in $u,v$ for fixed $\theta,\phi$.
It was already pointed out by Tanaka that in $d = 3$,
$(u,v) \longmapsto (u-v,n)n$ cannot be smooth.
To overcome this problem he introduced in \cite{T78} another transformation of parameters which is bijective, has jacobian 1 and hence can be used on the right side of \eqref{PARA}.
Such ideas have been extended to arbitrary dimension $d \geq 3$
and are briefly summarized in this section, see \cite{FM09} and \cite{LM12}. 
For this purpose set 
$S^{d-2} = \{ \xi \in \R^{d-1} \ | \ |\xi| = 1 \}$ and define
\[
 S^{d-2}(u-v) = \{ \omega \in \R^d \ | \ |u-v| = |\omega|, \ \ (u-v, \omega) = 0\}.
\]
The following is due to \cite{T78} and \cite{FM09}, see also \cite{FRS18} for this formulation.
\begin{Lemma}
 Let $u,v \in \R^d$ with $u \neq v$ and take $n \in S^{d-1}$. Then there exist $(\theta,\xi) \in (0,\pi] \times S^{d-2}$ 
 and a measurable bijective function $\Gamma(u-v, \cdot): S^{d-2} \longrightarrow S^{d-2}(u-v),\ \xi \longmapsto \Gamma(u-v, \xi)$
 such that
\begin{align}\label{N}
 n = \sin\left( \frac{\theta}{2} \right) \frac{u-v}{|u-v|} + \cos\left( \frac{\theta}{2}\right) \frac{\Gamma(u-v,\xi)}{|u-v|},
\end{align}
 where $\theta = \theta(n) \in (0,\pi]$ be the angle between $v^{\star} - u^{\star}$ and $v-u$, 
 i.e. it holds that $(v - u, v^{\star} - u^{\star}) = \cos(\theta) |v-u| | v^{\star} - u^{\star}|$.
\end{Lemma}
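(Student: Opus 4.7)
The plan is to carry out the argument in three short pieces: an orthogonal splitting of $n$, an identification of the coefficients with $\sin(\theta/2)$ and $\cos(\theta/2)$ via the elastic-collision formulas, and a measurable construction of the bijection $\Gamma(u-v,\cdot)$.

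Setting $\hat{w} = (u-v)/|u-v|$, first write $n = a\hat{w} + n_\perp$ with $a = (n,\hat{w})$ and $n_\perp \in \hat{w}^\perp$, so that $a^2 + |n_\perp|^2 = 1$. To pin down $a$, substitute \eqref{PARA} into $v^\star - u^\star$ to obtain
\[
 v^\star - u^\star = -(u-v) + 2(u-v,n)\,n,
\]
take the inner product with $v - u = -(u-v)$, and invoke $|v^\star - u^\star| = |u-v|$ (elastic conservation) to get $\cos\theta = 1 - 2a^2$. The half-angle identity $\cos\theta = 1 - 2\sin^2(\theta/2)$ then forces $a = \sin(\theta/2)$, the positive branch being selected by the convention $(u-v,n) \geq 0$ that is implicit in the parameterization $n = (v^\star - v)/|v^\star - v|$; consequently $|n_\perp| = \cos(\theta/2)$.

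For the parameterization $\Gamma(w,\cdot)$, I would choose, measurably in $w \in \R^d \setminus \{0\}$, an orthonormal basis $(f_1(w),\ldots,f_{d-1}(w))$ of $w^\perp$. Concretely: for each $w$ take the smallest index $j = j(w) \in \{1,\ldots,d\}$ such that $(\hat{w}, e_1, \ldots, \hat{e}_j, \ldots, e_d)$ is a basis of $\R^d$, apply Gram--Schmidt to this ordered tuple, and take the last $d-1$ orthonormal vectors as $f_1(w),\ldots,f_{d-1}(w)$. Setting
\[
 \Gamma(w,\xi) = |w|\sum_{i=1}^{d-1} \xi_i f_i(w), \qquad \xi = (\xi_1,\ldots,\xi_{d-1}) \in S^{d-2},
\]
yields, for each fixed $w$, a bijection onto $S^{d-2}(w)$. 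Given the decomposition of $n$, bijectivity then uniquely determines $\xi$ by $\Gamma(u-v,\xi) = |u-v|\, n_\perp/|n_\perp|$ whenever $n_\perp \neq 0$, and assembling the pieces delivers \eqref{N}; the degenerate case $n_\perp = 0$ corresponds to $\theta = \pi$, where the second term vanishes and $\xi$ may be taken arbitrarily.

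The main obstacle is the measurable selection of the frame $w \mapsto (f_i(w))_{i=1}^{d-1}$: a globally continuous orthonormal frame of $w^\perp$ over $\R^d \setminus \{0\}$ does not exist in general (topological obstructions such as the hairy-ball theorem), but the Borel measurability furnished by the piecewise Gram--Schmidt recipe above is all that is required for the joint measurability of $\Gamma$ and for its subsequent use in the collision integral and in the probabilistic analysis of \cite{FRS18}.
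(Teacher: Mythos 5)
The paper itself does not prove this Lemma; it is stated with a citation to Tanaka \cite{T78} and Fournier--Mouhot \cite{FM09}, so there is no internal argument to compare against. Your reconstruction is correct and follows the standard line of those references. In detail: the orthogonal splitting $n = a\widehat{w} + n_\perp$ with $\widehat{w} = (u-v)/|u-v|$; the identity $v^\star - u^\star = -(u-v) + 2(u-v,n)n$, which together with $|v^\star-u^\star| = |u-v|$ (a one-line consequence of \eqref{PARA}, or of the stated conservation laws) gives $\cos\theta = 1 - 2a^2$; the half-angle identity fixing $a = \sin(\theta/2)$, $|n_\perp| = \cos(\theta/2)$; and the piecewise Gram--Schmidt construction of a Borel-measurable orthonormal frame of $w^\perp$ over $\R^d\setminus\{0\}$, which yields $\Gamma(w,\xi) = |w|\sum_i\xi_i f_i(w)$ as a bijection onto $S^{d-2}(w)$. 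Your remark that a globally continuous frame cannot exist (non-parallelizability of spheres) and that Borel measurability suffices is precisely the reason the Lemma asks for $\Gamma$ to be measurable rather than continuous, and it is good that you make this explicit. One small point worth stating: the sign convention $a=(n,\widehat{w})>0$ implicit in $n=(v^\star-v)/|v^\star-v|$ means that as written the representation \eqref{N} only covers the open half-sphere $\{n:(n,u-v)>0\}$; for $(n,u-v)<0$ one passes to $-n$, which leaves $(v^\star,u^\star)$ in \eqref{PARA} unchanged, and $(n,u-v)=0$ is the grazing case excluded by $\theta\in(0,\pi]$. You gesture at this but it deserves to be said outright, since the Lemma's hypothesis ``take $n\in S^{d-1}$'' is phrased unconditionally.
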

The representation of the vector $n$ in \eqref{N} corresponds to the blue lines in Figure 1.
Inserting this into \eqref{PARA} gives after a short computation
\begin{align}\label{PARA:01}
 \begin{cases} v^{\star} &= v + \alpha(v,u,\theta,\xi)
 \\ u^{\star} &= u - \alpha(v,u,\theta,\xi) \end{cases},
\end{align}
where
\begin{align}\label{FPE:00}
 \alpha(v,u,\theta,\xi) = \sin^2\left( \frac{\theta}{2}\right)(u-v) + \frac{\sin(\theta)}{2}\Gamma(u-v,\xi).
\end{align}
Note that \eqref{PARA:01} remains true also for $v = u$, if we let $\alpha(v,v,\theta,\xi) = 0 $, i.e. set $\Gamma(0,\xi) = 0$ in \eqref{FPE:00}. Using \eqref{FPE:00} one finds for all $u,v \in \R^d$, $\theta \in (0,\pi]$ and $\xi \in S^{d-2}$ the identity
 \begin{align}\label{PARA:00}
 |\alpha(v,u,\theta,\xi)| = |v-u| \sin\left( \frac{\theta}{2}\right).
\end{align}
From now on we work with the parameterization \eqref{PARA:01}, where $\alpha$ is given by \eqref{FPE:00}.

\subsection{Some notation}
Here and below we let $\langle v \rangle := (1 + |v|^2)^{1/2}$ and frequently use the elementary inequalities
\[
 \langle v + w\rangle \leq \sqrt{2}\left(\langle v \rangle + \langle w \rangle\right) \qquad \text{ and } \qquad \langle v+w \rangle \leq \sqrt{2}\langle v \rangle \langle w \rangle.
\]
We denote by $K, C > 0$ generic constants which may vary from line to line. Finally, for $k \in \N$ we use the following function spaces
\begin{itemize}
 \item $C^k(\R^{2d})$ the space of all continuous functions on $\R^{2d}$ which are $k$-times continuously differentiable.
 \item $C_b^k(\R^{2d})$ the space of all $f \in C^k(\R^{2d})$ such that $f$ and its first $k$ derivatives are bounded. 
 \item $C_c^k(\R^{2d})$ the space of all $f \in C^k(\R^{2d})$ such that $f$ has compact support.
 \item $\mathrm{Lip}(\R^{2d})$ the space of all globally Lipschitz continuous functions.
\end{itemize}
Denote by $\mathcal{P}(\R^d)$ the space of probability measures and let 
\[
 \langle \psi, \mu \rangle = \int_{\R^{2d}}\psi(r,v)d\mu(r,v)
\]
be the pairing between $\mu \in \mathcal{P}(\R^d)$ and an integrable function $\psi$.

\subsection{Weak formulation for measure-solutions of the Enskog equation}

In this work we take any dimension $d \geq 3$ and assume that the collision kernel $B$ is given by the velocity cross-section $\sigma \geq 0$ and a measure $Q$ such that
\begin{align}\label{BCOLL}
 B(|v-u|,n)dn \equiv \sigma(|v-u|)Q(d\theta)d\xi, \qquad \kappa := \int_{(0,\pi]}\theta Q(d\theta) < \infty
\end{align}
where $d\xi$ is the Lebesgue surface measure on $S^{d-2}$ (recall \eqref{N}).
Moreover suppose that $0 \leq \beta \in C_c^1(\R^{d})$ is symmetric and, there exists $\gamma \in (-d,2]$ and $c_{\sigma} \geq 1$ such that
 \[
  |\sigma(|z|) - \sigma(|w|)| \leq c_{\sigma}| |z|^{\gamma} - |w|^{\gamma}|, \ \ z,w \in \R^d \backslash \{0\}.
 \]
and
\[
 \sigma(|z|) \leq c_{\sigma} \begin{cases} |z|^{\gamma}, & \gamma \in (-d,0] \\ (1 + |z|^2)^{\frac{\gamma}{2}}, & \gamma \in [0,2] \end{cases}.
\]
Without loss of generality we assume that $\beta$ is bounded by $1$.
\begin{Remark}
These conditions are satisfied for $\sigma(z) = |z|^{\gamma}$ and also $\sigma(|z|) = (1+|z|^2)^{\frac{\gamma}{2}}$ with $\gamma \in (-d,2]$.
In particular, we cover the case of hard and soft potentials, provided $s > 3$.
\end{Remark}

Below we describe the weak formulation of the Enskog equation for measures, see \cite{ARS17} and \cite{FRS18} for additional details.
Set $\Xi = (0,\pi] \times S^{d-2}$ and, for $\psi \in C_b^1(\R^d)$, let
\begin{align}\label{ENSKOG:OPERATOR}
 (\mathcal{A}\psi)(r,v;q,u) &= v \cdot(\nabla_r \psi)(r,v) + \sigma(|v-u|)\beta(r-q)(\mathcal{L}\psi)(r,v;u),
 \\ \notag (\mathcal{L}\psi)(r,v;u) &= \int_{\Xi}\left( \psi(r,v + \alpha(v,u,\theta,\xi)) - \psi(r,v)\right) Q(d\theta)d\xi.
\end{align}
By \eqref{PARA:00} we obtain 
$|\alpha(v,u,\theta,\xi)| \leq \theta |v-u|$ and 
\begin{align}\label{FPE:02}
 |\psi(r,v + \alpha(v,u,\theta,\xi)) - \psi(r,v)| \leq \theta |v-u|\max\limits_{|\zeta| \leq 2 (|v| + |u|)} |\nabla_{\zeta}\psi(r,\zeta)|.
\end{align}
In particular, $(\mathcal{L}\psi)(r,v;u)$ is well-defined for all $r,v,u$ and all $\psi \in C^1(\R^{2d})$.
Moreover, if $\psi \in C_b^1(\R^{2d})$, then we obtain
\begin{align}\label{eq: estimate A}
 |\mathcal{A}\psi(r,v;q,u)| \leq \Vert \nabla_r \psi \Vert_{\infty} |v| + \Vert \nabla_v \psi \Vert_{\infty} |v-u|\sigma(|v-u|)|S^{d-2}|\kappa.
\end{align}

\begin{Definition}
 Let $\mu_0 \in \mathcal{P}(\R^{2d})$ and fix $T > 0$.
 A weak solution to the Enskog equation is a family $(\mu_t)_{t \in [0,T]} \subset \mathcal{P}(\R^{2d})$ such that
 \begin{align}\label{INTRO:02}
  \int_{0}^{T}\int_{\R^{4d}}|v-u|\sigma(|v-u|)\mu_t(dr,dv)\mu_t(dq,du)dt < \infty
 \end{align}
 and, for any $\psi \in C_b^1(\R^{2d})$, we have
 \begin{align}\label{FPE:ENSKOG}
  \langle \psi, \mu_t \rangle = \langle \psi, \mu_0 \rangle + \int_0^t \langle \mathcal{A}\psi, \mu_s \otimes \mu_s \rangle ds, \ \ t \in [0,T].
 \end{align}
 A solution is conservative if it has finite second moments in $v$ and
 \[
  \int_{\R^{2d}}\begin{pmatrix} v \\ |v|^2 \end{pmatrix}\mu_t(dr,dv) = \int_{\R^{2d}}\begin{pmatrix} v \\ |v|^2 \end{pmatrix} \mu_0(dr,dv), \ \ t \in [0,T].
 \]
 Analogously we define a global weak solution to the Enskog equation.
\end{Definition}
Note that one has $\mathcal{A}1 = 0$ where $1$ denotes the constant function equal to one. Hence total mass is conserved and we may restrict our study of the Enskog equation without loss of generality to the case of probability distributions.
A construction of such solutions was recently studied in \cite{ARS17} and \cite{FRS18}.

\subsection{Stability estimates for the Enskog equation}

In this work we prove stability estimates 
for weak solutions to the Enskog equation 
in the Wasserstein distance.
In contrast to the space-homogeneous case studied in \cite{FM09}  the additional free transport term $v \cdot \nabla_r$ prevents us from directly applying their methods. 
In order to take this transport of particles into account we introduce the \textit{shifted} Wasserstein distance
\[
 W_1^t(\mu,\nu) := W_1(S(-t)^*\mu, S(-t)^*\nu), \qquad t \in \R,
\]
where $S(t)$ is a one-parameter group of transformations defined by
\[
 S(t)\psi(r,v) = \psi(r+tv,v), \qquad (r,v) \in \R^{2d}, \ \ t \in \R,
\] 
$S(t)^*$ denotes the adjoint operator to $S(t)$ 
acting on measures $\mu \in \mathcal{P}(\R^{2d})$ and $\psi \in C_b(\R^{2d})$ via
\begin{align}\label{EQ:06}
 \langle S(t)\psi, \mu \rangle = \langle \psi, S(t)^*\mu \rangle, \qquad t \in \R.
\end{align}
and $W_1$ denotes the classical Wasserstein distance, i.e.
\[
 W_1(\mu, \nu) := \sup_{\Vert \psi \Vert_{\mathrm{Lip}} \leq 1} \langle \psi, \mu - \nu \rangle,
\]
where $\mathrm{Lip}(\R^{2d}) = \{ \psi \ | \ \| \psi \|_{\mathrm{Lip}} < \infty\}$ is the space of all globally Lipschitz continuous functions and
\[
 \| \psi \|_{\mathrm{Lip}} = \sup \limits_{(r,v) \neq (q,u)} \frac{|\psi(r,v) - \psi(q,u)|}{|r-q| + |v-u|}.
\]
In the case of hard potentials we obtain the following.
\begin{Theorem}\label{FPE:UNIQTH:00}
 Suppose that $\gamma \in [0,2]$, fix $T > 0$ and $\delta > 0$. Then there exists a constant $K > 0$ 
 such that for two given weak solutions $(\mu_t)_{t \in [0,T]}, (\nu_t)_{t \in [0,T]}$ to the Enskog equation 
 satisfying 
 \begin{align}\label{EQ:07}
  \mathcal{C}_{\gamma}(T,\mu + \nu, \delta) := \sup \limits_{t \in [0,T]}\int_{\R^{2d}}\left( e^{\delta |v|^{1+\gamma}} + |r|^{1+\delta} \right)(\mu_t + \nu_t)(dr,dv) < \infty
 \end{align}
 we have, for $t \in [0,T]$,
 \begin{align*}
   W_1^t(\mu_t,\nu_t) &\leq W_1(\mu_0, \nu_0) 
   \\ & \ \ \ + K \mathcal{C}_{\gamma}(T,\mu+ \nu,\delta) 
   \int_0^t W^s_1(\mu_s,\nu_s)\left(1  + |\log(W_1^s(\mu_s,\nu_s))|\right)ds.
 \end{align*}
\end{Theorem}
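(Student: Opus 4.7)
The plan is to combine Kantorovich--Rubinstein duality with a time-dependent test function tailored to the free-transport shift. For any $\psi\in\mathrm{Lip}(\R^{2d})$ with $\|\psi\|_{\mathrm{Lip}}\leq 1$, I define
\[
\varphi_s(r,v):=\psi(r-sv,v),\qquad s\in[0,T],
\]
and observe the key cancellation identity $\partial_s\varphi_s + v\cdot\nabla_r\varphi_s = 0$. By definition of the shift, $\langle\varphi_s,\mu_s\rangle = \langle\psi, S(-s)^*\mu_s\rangle$, so $W_1^t(\mu_t,\nu_t) = \sup_\psi\langle\varphi_t,\mu_t-\nu_t\rangle$. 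The shifted metric is engineered precisely so that this cancellation takes place.

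Next, I would insert $\varphi_s$ (via a standard $C_b^1$-truncation/approximation) into \eqref{FPE:ENSKOG} in its time-dependent form, which follows by a standard telescoping argument over a partition of $[0,t]$. Subtracting the identities for $\mu$ and $\nu$ and exploiting the cancellation of the drift part of $\mathcal{A}\varphi_s$ with $\partial_s\varphi_s$ leaves
\begin{align*}
\langle\varphi_t,\mu_t-\nu_t\rangle = \langle\psi,\mu_0-\nu_0\rangle + \int_0^t \langle\sigma\beta\,\mathcal{L}\varphi_s,\mu_s\otimes\mu_s - \nu_s\otimes\nu_s\rangle\,ds.
\end{align*}
Passing to the shifted measures $\tilde\mu_s:=S(-s)^*\mu_s$ and $\tilde\nu_s:=S(-s)^*\nu_s$ by pushforward and using $\varphi_s(r+sv,v+\alpha)=\psi(r-s\alpha,v+\alpha)$ produces an integrand
\[
\widetilde F_s(r,v;q,u) := \sigma(|v-u|)\,\beta\bigl(r-q+s(v-u)\bigr)\int_\Xi\bigl(\psi(r-s\alpha,v+\alpha) - \psi(r,v)\bigr)Q(d\theta)d\xi,
\]
with $\alpha=\alpha(v,u,\theta,\xi)$. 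Expanding $\tilde\mu_s\otimes\tilde\mu_s - \tilde\nu_s\otimes\tilde\nu_s = (\tilde\mu_s-\tilde\nu_s)\otimes\tilde\mu_s + \tilde\nu_s\otimes(\tilde\mu_s-\tilde\nu_s)$ and choosing an optimal coupling realizing $W_1(\tilde\mu_s,\tilde\nu_s) = W_1^s(\mu_s,\nu_s)$ reduces the estimate to bounding, in each variable pair separately, the Lipschitz modulus of the partial averages of $\widetilde F_s$ against $\tilde\mu_s$ and $\tilde\nu_s$.

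The hard step is this Lipschitz control, since $\psi$ is only Lipschitz and $\sigma$ is unbounded when $\gamma>0$, so a pointwise uniform Lipschitz bound on $\widetilde F_s$ is not available. The natural remedy is a Fournier--Mouhot truncation in the spirit of \cite{FM09}: introduce a parameter $\varepsilon>0$, split the collision integration into the regular zone $\{|v-u|\leq\varepsilon^{-1}\}$ and its tail, control the main part via the elementary bound $|\psi(r-s\alpha,v+\alpha)-\psi(r,v)|\leq(1+s)\theta|v-u|$ together with the Lipschitz hypothesis on $\sigma$ and the smoothness of $\beta$, and absorb the tail contribution via the exponential moment $e^{\delta|v|^{1+\gamma}}$, with the polynomial moment $|r|^{1+\delta}$ used to guarantee finite first moments of $\tilde\mu_s,\tilde\nu_s$ and to justify the truncation of $\psi$ to $C_b^1$. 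Optimizing $\varepsilon$ against $W_1^s(\mu_s,\nu_s)$ then produces the characteristic $(1+|\log W_1^s(\mu_s,\nu_s)|)$ correction. The main obstacle is precisely this truncation-and-optimization step, together with bookkeeping the shift-induced factors $s\leq T$ and the translated argument $r-q+s(v-u)$ in $\beta$; once these are absorbed into the constant $K=K(T,\delta)$, taking the supremum over $\psi$ with $\|\psi\|_{\mathrm{Lip}}\leq 1$ yields the claimed inequality.
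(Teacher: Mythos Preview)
Your overall architecture is sound and close in spirit to the paper's: derive a Duhamel/mild identity that kills the transport term, express the difference through the collision part only, and then use a truncation-plus-optimization in the style of Fournier--Mouhot to produce the $(1+|\log W_1^s|)$ factor. Your pushforward to $\tilde\mu_s=S(-s)^*\mu_s$ and the bilinear splitting $\tilde\mu_s\otimes\tilde\mu_s-\tilde\nu_s\otimes\tilde\nu_s$ are a legitimate alternative to the paper's symmetric product coupling $H_s\otimes H_s$ of Proposition~\ref{general coupling inequality}; either route reduces matters to estimating differences of $\widetilde F_s$ in one variable pair at a time.

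There is, however, a genuine gap at the ``hard step''. The elementary bound $|\psi(r-s\alpha,v+\alpha)-\psi(r,v)|\le(1+s)\theta|v-u|$ controls only the \emph{size} of the integrand, not its \emph{Lipschitz variation} in $(r,v)$. To bound the partial average $G(r,v):=\int \widetilde F_s(r,v;q,u)\,d\tilde\mu_s(q,u)$ in Lipschitz norm you must compare $\alpha(v,u,\theta,\xi)$ with $\alpha(\tilde v,u,\theta,\xi)$, and this map is \emph{not} Lipschitz in $v$ for fixed $\xi$ (Tanaka's observation). The paper resolves this via Lemma~\ref{PARAMETARIZATION}: one replaces $\xi$ by $\xi_0(v-u,\tilde v-u,\xi)$ in the second term, which is a measure-preserving change of variables on $S^{d-2}$, and only then obtains $|\alpha-\tilde\alpha_0|\le 2\theta(|v-\tilde v|+|u-\tilde u|)$ as in \eqref{EQ:13}. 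Without invoking this device your Lipschitz modulus of $G$ is infinite even after the truncation $\{|v-u|\le\varepsilon^{-1}\}$, and the argument stalls. This is the missing idea.

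Two smaller points. First, your truncation parameter is $|v-u|\le\varepsilon^{-1}$, whereas the $x\log x$ mechanism in this setting actually comes from truncating the \emph{single} velocity $\langle v\rangle\le b$ and using the exponential moment on $\{\langle v\rangle>b\}$; see the last lemma of Section~5, where the choice $b^{1+\gamma}\sim|\log W_1^s|$ is made. Second, the moment $|r|^{1+\delta}$ is not merely for finiteness of first moments: it is needed, via Young's inequality, to decouple the product $\langle v\rangle^{1+\gamma}(|r|+|\tilde r|)$ that arises from the spatial Lipschitz variation of $\beta$ combined with the velocity growth of $\sigma$; this is where the $|r-\tilde r|$ contribution (estimate \eqref{ESTIMATE1}) differs from the homogeneous case and must be handled explicitly.
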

For soft potentials we obtain the following.
\begin{Theorem}\label{FPE:UNIQTH:01}
 Suppose that $\gamma \in (-d,0)$ and fix $T > 0$.
 \begin{enumerate}
  \item[(a)] If $\gamma \in (-d,-1]$, then there exists a constant $K > 0$ such that for two given weak solutions $(\mu_t)_{t \in [0,T]}, (\nu_t)_{t \in [0,T]}$ to the Enskog equation satisfying
  \begin{align}\label{EQ:08}
   \Lambda(\mu_t + \nu_t):= \sup \limits_{u \in \R^d} \int_{\R^{2d}}|v-u|^{\gamma}(\mu_t + \nu_t)(dr,dv) < \infty
  \end{align}
  and \eqref{EQ:07} we have, for $t \in [0,T]$,
  \[
   W_1^t(\mu_t,\nu_t) \leq W_1(\mu_0,\nu_0)\exp\left( K\int_{0}^{t}(1 + \Lambda(\mu_s + \nu_s))ds\right).
  \]
  \item[(b)] If $\gamma \in (-1,0)$. Then for each $\delta > 0$ there exists a constant $K > 0$ such that 
  for two given weak solutions $(\mu_t)_{t \in [0,T]}, (\nu_t)_{t \in [0,T]}$ to the Enskog equation
  satisfying \eqref{EQ:07} and \eqref{EQ:08} we have, for $t \in [0,T]$,
 \begin{align*}
   W_1^t(\mu_t,\nu_t) &\leq W_1(\mu_0, \nu_0) 
  + K \mathcal{C}_{\gamma}(T,\mu+ \nu,\delta)
  \\ &\ \ \ \cdot \sup \limits_{s \in [0,T]} \Lambda(\mu_s + \nu_s) \int_0^t W_1^s(\mu_s,\nu_s)\left(1 + |\log(W_1^s(\mu_s,\nu_s))|\right)ds.
 \end{align*}
 \end{enumerate}
\end{Theorem}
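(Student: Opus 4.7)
My plan follows the scheme used for the hard-potential case (Theorem~\ref{FPE:UNIQTH:00}), adapted to handle the singularity of $\sigma(|v-u|)\sim|v-u|^{\gamma}$ at $v=u$. The first step is to eliminate the free transport by working with time-dependent test functions: given $\psi\in\mathrm{Lip}(\R^{2d})$ with $\|\psi\|_{\mathrm{Lip}}\le 1$, set $\varphi_s(r,v):=S(-s)\psi(r,v)=\psi(r-sv,v)$, so that a direct calculation yields $(\partial_s+v\cdot\nabla_r)\varphi_s\equiv 0$. Extending the weak formulation \eqref{FPE:ENSKOG} to such time-dependent test functions via smoothing (with \eqref{INTRO:02} and \eqref{eq: estimate A} justifying the passage to the limit) gives
\[
 \langle\psi,S(-t)^*(\mu_t-\nu_t)\rangle = \langle\psi,\mu_0-\nu_0\rangle + \int_0^t\!\langle\sigma(|v-u|)\beta(r-q)(\mathcal{L}\varphi_s)(r,v;u),\mu_s\otimes\mu_s-\nu_s\otimes\nu_s\rangle\,ds.
\]
Taking the supremum over $\psi$ on the left recovers $W_1^t(\mu_t,\nu_t)$, and the initial contribution is bounded by $W_1(\mu_0,\nu_0)$, so the theorem reduces to a uniform-in-$\psi$ Gronwall-type bound for the collision integral.

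Next I would pass to the shifted coordinates $\widetilde r:=r-sv$, introducing $\widetilde\mu_s:=S(-s)^*\mu_s$ and $\widetilde\nu_s:=S(-s)^*\nu_s$. In these variables the collision integrand becomes a function $G_s(\widetilde X,\widetilde Y)$ in which $\sigma$ still depends only on the velocity difference, $\beta$ is evaluated at $\widetilde r-\widetilde q+s(v-u)$ and is therefore Lipschitz with constant $(1+s)\|\nabla\beta\|_\infty$, and $\mathcal{L}\varphi_s$ is an increment of $\psi$ at shifted arguments whose Lipschitz constant in the velocity variable picks up a factor $1+s$ from the shift. Using a near-optimal $W_1$-coupling of $\widetilde\mu_s$ and $\widetilde\nu_s$ together with the bilinear identity
\[
 \widetilde\mu_s\otimes\widetilde\mu_s-\widetilde\nu_s\otimes\widetilde\nu_s = (\widetilde\mu_s-\widetilde\nu_s)\otimes\widetilde\mu_s + \widetilde\nu_s\otimes(\widetilde\mu_s-\widetilde\nu_s),
\]
the collision integral rewrites as an expectation of pointwise differences $G_s(\widetilde X,\widetilde Y)-G_s(\widetilde X',\widetilde Y)$ (plus a symmetric term) with $\E[|\widetilde X-\widetilde X'|]=W_1^s(\mu_s,\nu_s)$.

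The main obstacle is controlling the $\sigma$-contribution to these pointwise differences, since the H\"older-type bound $|\sigma(|z|)-\sigma(|w|)|\le c_{\sigma}||z|^{\gamma}-|w|^{\gamma}|$ is singular when either velocity difference is close to zero. I would split the coupling integral at a cutoff $\varepsilon>0$: on the set where $|V_X-V_Y|\le\varepsilon$ or $|V_{X'}-V_Y|\le\varepsilon$, use the crude bound $|\sigma\,\mathcal{L}\varphi_s|\le C(1+s)|v-u|^{1+\gamma}$ together with $\Lambda(\mu_s+\nu_s)$ to absorb the singularity; on its complement, use the (non-singular) Lipschitz bounds directly. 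The exponential moment $\mathcal{C}_{\gamma}(T,\mu+\nu,\delta)$ enters to control the polynomial $|v|$-growth produced by the shifted test functions. For part (a) with $\gamma\in(-d,-1]$ the singularity is strong enough that $\Lambda$ alone closes the estimate linearly in $W_1^s(\mu_s,\nu_s)$ without any cutoff, yielding the pure exponential Gronwall bound. For part (b) with $\gamma\in(-1,0)$ the singularity is weaker and must be balanced against the Lipschitz contribution; optimizing the cutoff $\varepsilon\asymp W_1^s(\mu_s,\nu_s)$ produces the logarithmic factor $1+|\log W_1^s(\mu_s,\nu_s)|$, in the spirit of Fournier--Mouhot \cite{FM09}. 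A final Osgood-type lemma, tolerant of the log-factor and identical to the one used in the hard-potential case, converts the resulting integral inequalities into the stated bounds.
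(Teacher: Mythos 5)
Your high-level scheme is right: pass to the mild formulation via $S(-s)\psi$ (Proposition~\ref{FPEUNIQ:LEMMA00}), reduce to the coupling inequality of Proposition~\ref{general coupling inequality}, and then estimate $\Psi$ distinguishing $\gamma\le -1$ from $\gamma\in(-1,0)$. Part (a) is also diagnosed correctly: since $1+\gamma\le 0$ both the $\sigma$-Lipschitz contribution and the $\beta$-Lipschitz contribution to $\Psi$ produce only negative powers $|v-u|^{\gamma}$ and $|v-u|^{1+\gamma}\le 1+|v-u|^{\gamma}$, so $\Lambda$ alone closes a linear Gronwall bound.

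The gap is in part (b), where you misidentify the source of the logarithmic factor. You attribute it to balancing the singularity of $\sigma$ near $v=u$ via a cutoff $\varepsilon\asymp W_1^s$ in the velocity difference. In fact the singularity of $\sigma$ is handled \emph{identically} to part (a), by $\Lambda$ alone, with no cutoff: Lemma~\ref{FPE:LEMMA04}(a) and (b) yield the same term $C(|v-u|^{\gamma}+|\widetilde v-\widetilde u|^{\gamma})(|v-\widetilde v|+|u-\widetilde u|)$, whose double integral against $dH_s^0\,dH_s^1$ is bounded by $C\,\Lambda(\mu_s+\nu_s)\,W_1^s(\mu_s,\nu_s)$. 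What actually changes for $\gamma\in(-1,0)$ is the $\beta$-Lipschitz piece: the prefactor becomes $\langle v\rangle^{1+\gamma}+\langle u\rangle^{1+\gamma}+\langle\widetilde v\rangle^{1+\gamma}+\langle\widetilde u\rangle^{1+\gamma}$ with $1+\gamma>0$, i.e.\ an \emph{unbounded velocity weight}, multiplied by the spatial increment $|r-\widetilde r|+|q-\widetilde q|$. This produces integrals of the form $\int(\langle v\rangle^{1+\gamma}+\langle\widetilde v\rangle^{1+\gamma})\,|r-\widetilde r|\,dH_s$, which cannot be absorbed by $\Lambda$. The paper handles them exactly as in \eqref{ESTIMATE1} for hard potentials: truncate at $\langle v\rangle\le b$, control the complement using the exponential moment $\mathcal{C}_{\gamma}(T,\mu+\nu,\delta)$, and optimize $b^{1+\gamma}\asymp|\log W_1^s(\mu_s,\nu_s)|$. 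Your $\varepsilon$-cutoff in $|V_X-V_Y|$ is aimed at the (already benign) $\sigma$-singularity and leaves the genuinely problematic $\langle v\rangle^{1+\gamma}|r-\widetilde r|$ term untreated, so the argument for (b) does not close as written. Replacing the $\varepsilon$-cutoff with a velocity-magnitude truncation against $\mathcal{C}_{\gamma}$ would repair the proof.
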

Condition \eqref{EQ:08} stems from the necessity to compensate the singularity of $\sigma(|v-u|)$ at zero
appearing in the case of soft potentials, see \eqref{EQ}.
\begin{Remark}
 Suppose that $\gamma \in (-d,0)$ and let $\mu_t(dr,dv) = f_t(r,v)drdv$, $\nu_t(dr,dv) = g_t(r,v)drdv$.
 Then for each $p > \frac{d}{d+\gamma}$ there exists a constant $C(p,\gamma) > 0$ such that 
 \begin{align*}
  \Lambda(\mu_t + \nu_t) &\leq 2 + C(p,\gamma)
   \bigg[  \left(\int_{\R^d}\left( \int_{\R^d} f_t(r,v)dr \right)^p dv \right)^{1/p} 
   \\ &\qquad \qquad \qquad \qquad +  \left( \int_{\R^d} \left( \int_{\R^d}g_t(r,v)dr \right)^p dv \right)^{1/p} \bigg].
 \end{align*}
\end{Remark}
Above estimates are sufficient to imply uniqueness and stability (with respect to initial data) of weak solutions to the Enskog equation. 
Indeed, by using a generalization of the Gronwall inequality as stated in the appendix (see e.g. \cite[Lemma 5.2.1, p. 89]{C95}) we obtain the following.
\begin{Theorem}
 Fix $T > 0$.
 \begin{enumerate}[leftmargin = *]
  \item[(a)] Let $(\mu_t)_{t \in [0,T]}$, $(\nu_t)_{t \in [0,T]}$ be two weak solutions to the Enskog equation. 
  Suppose one of the following conditions is satisfied:
  \begin{itemize}
   \item $\gamma \in [0,2]$ and there exists $\delta > 0$ with 
   \[
    \mathcal{C}_{\gamma}(T,\mu + \nu, \delta) < \infty.
   \]
   \item $\gamma \in (-1,0)$ and there exists $\delta > 0$ with 
   \[
    \mathcal{C}_{\gamma}(T,\mu + \nu, \delta) + \sup\limits_{t \in [0,T]}\Lambda(\mu_t + \nu_t) < \infty.
   \]
   \item $\gamma \in (-d,-1]$, 
   \[
    \sup \limits_{t \in [0,T]} \left\{ \Lambda(\mu_t + \nu_t) 
    + \int_{\R^{2d}}|v|^2 (\mu_t(dr,dv) + \nu_t(dr,dv)) \right\} < \infty.
   \]
  \end{itemize}
  If $\mu_0 = \nu_0$, then $\mu_t = \nu_t$ for all $t \in [0,T]$.
  \item[(b)] Let $(\mu_t^{(n)})_{t \in [0,T]}$ and $(\mu_t)_{t \in [0,T]}$ be weak solutions to the Enskog equation. 
  Suppose one of the following conditions is satisfied:
  \begin{itemize}
   \item $\gamma \in [0,2]$ and there exists $\delta > 0$ 
   with 
   \[
    \sup\limits_{n \in \N}\mathcal{C}_{\gamma}(T,\mu^{(n)} + \mu, \delta) < \infty.
   \]
   \item $\gamma \in (-1,0)$ and there exists $\delta > 0$ with 
   \[
    \sup \limits_{n \in \N}\left\{ \mathcal{C}_{\gamma}(T,\mu^{(n)} + \mu, \delta) + \sup\limits_{t \in [0,T]}\Lambda(\mu^{(n)}_t + \mu_t) \right\} < \infty.
   \]
   \item $\gamma \in (-d,-1]$ and
   \[
    \sup \limits_{n \in \N}\sup\limits_{t \in [0,T]}\left\{ \Lambda(\mu_t^{(n)} + \mu_t) + \int_{\R^{2d}}|v|^2 (\mu_t^{(n)}(dr,dv) + \mu_t(dr,dv)) \right\} < \infty.
   \]
  \end{itemize}
  If $W_1(\mu_0^{(n)}, \mu_0) \longrightarrow 0$ as $n \to \infty$, then
  \[
    \lim \limits_{n \to \infty}\sup_{t \in [0,T]} W_1^t(\mu_t^{(n)}, \mu_t) = 0.
  \]
 \end{enumerate}
\end{Theorem}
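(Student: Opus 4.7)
The plan is to reduce both parts of the theorem to the three inequalities established in Theorems \ref{FPE:UNIQTH:00} and \ref{FPE:UNIQTH:01}, combined with a Bihari-type (generalized Gronwall) comparison argument.

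For part (a), set $\phi(t):=W_1^t(\mu_t,\nu_t)$, which is finite under each of the listed moment assumptions (it is controlled by first moments of the shifted measures $S(-t)^*\mu_t,\,S(-t)^*\nu_t$, which the hypotheses supply). Since $\mu_0=\nu_0$ we have $\phi(0)=W_1(\mu_0,\nu_0)=0$. In the very soft case $\gamma\in(-d,-1]$, Theorem \ref{FPE:UNIQTH:01}(a) yields the linear estimate
\[
\phi(t)\;\leq\;\phi(0)\,\exp\Big(K\int_0^t(1+\Lambda(\mu_s+\nu_s))\,ds\Big),
\]
and the integral is finite by hypothesis; hence $\phi\equiv 0$. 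In the cases $\gamma\in[0,2]$ and $\gamma\in(-1,0)$, Theorems \ref{FPE:UNIQTH:00} and \ref{FPE:UNIQTH:01}(b) give an inequality of the form
\[
\phi(t)\;\leq\;\phi(0)+C\int_0^t\phi(s)\bigl(1+|\log\phi(s)|\bigr)\,ds,
\]
where $C>0$ is finite thanks to the assumed bounds on $\mathcal{C}_{\gamma}$ and $\sup_s\Lambda(\mu_s+\nu_s)$. The modulus $g(x)=x(1+|\log x|)$ satisfies the Osgood condition $\int_{0^+}dx/g(x)=\infty$, so the generalized Gronwall inequality (Lemma 5.2.1 of \cite{C95}) applied with $\phi(0)=0$ forces $\phi\equiv 0$. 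In all cases we conclude $S(-t)^*\mu_t=S(-t)^*\nu_t$, and since $S(-t)^*$ is a bijection on $\mathcal{P}(\R^{2d})$, this yields $\mu_t=\nu_t$ on $[0,T]$.

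For part (b), set $\phi_n(t):=W_1^t(\mu_t^{(n)},\mu_t)$. The uniform moment bounds permit us to choose the constants in the three inequalities independently of $n$, and $\phi_n(0)=W_1(\mu_0^{(n)},\mu_0)\to 0$ by hypothesis. In the very soft case the linear estimate gives $\sup_{t\in[0,T]}\phi_n(t)\leq\phi_n(0)\exp(KT(1+\sup_s\Lambda))\to 0$. In the remaining two cases, the generalized Gronwall comparison bounds $\phi_n(t)$ by the maximal solution $y_n$ of $y'=Cy(1+|\log y|)$ with $y_n(0)=\phi_n(0)$. Solving explicitly via the change of variables $u=\log(1/y)$ yields, for $\phi_n(0)$ small enough, $y_n(t)\leq e^{1-e^{-Ct}}\,\phi_n(0)^{e^{-Ct}}$, which tends to $0$ uniformly on $[0,T]$ as $\phi_n(0)\to 0$, giving the claimed convergence.

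The main obstacle I anticipate is the careful application of the Bihari--Osgood comparison in the presence of the superlinear modulus $g(x)=x(1+|\log x|)$: one must verify that $\phi$ and $\phi_n$ are sufficiently regular in $t$ (measurability, or lower semicontinuity) and handle the vanishing of the argument inside the logarithm. This is standard and is typically executed by regularizing with $\phi+\varepsilon$, applying the comparison to the modified inequality, and letting $\varepsilon\downarrow 0$ while invoking the divergence of $\int_{0^+}dx/g(x)$ only in the final step. All remaining work is direct bookkeeping of the constants appearing in the preceding inequalities under the moment hypotheses.
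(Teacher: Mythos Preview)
Your proposal is correct and follows exactly the approach the paper intends: the paper does not give a separate proof of this theorem but simply remarks that it follows from Theorems \ref{FPE:UNIQTH:00} and \ref{FPE:UNIQTH:01} together with the generalized Gronwall inequality (Lemma \ref{UNIQ:LEMMA02}), and you have filled in precisely those details, including the Osgood check for $g(x)=x(1+|\log x|)$ and the explicit comparison bound for part (b).
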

Our proofs are partially inspired by the work of Fournier and Mouhot \cite{FM09} where 
similar estimates for solutions to the space-homogeneous Boltzmann equation have been established.
However, since we work in the space-inhomogeneous setting we have to replace the classical Wasserstein distance $W_1$ to by a shifted distance $W_1^t$ which compensates the free transport operator $v \cdot \nabla_r$ appearing in the definition of $\mathcal{A}$, see \eqref{ENSKOG:OPERATOR}. For hard-potentials the authors have used in \cite{FM09} Povzner inequalities to prove creation of exponential moments for solutions to the (space-homogeneous) Boltzmann equation, see also \cite{LM15} and the references therein. Their proofs implicitly use the fact that any two particles may perform a collision. In contrast to that, in the space-inhomogeneous setting studied here $\beta$ is compactly supported and hence only particles being close enough may perform a collision. This prevents us from proving similar results on the creation of moments for the Enskog equation with hard potentials.

Other uniqueness results for the space-homogeneous Boltzmann equation are based on additional regularity assumptions for the solution, see e.g. \cite{DM09} and \cite{X16}.

\section{Mild formulation for the Enskog equation}
In order to prove the desired stability estimates for the shifted distance $W_1^t$, it is reasonable to use another formulation of the Enskog equation which involves the semigroup $S(t)$. This is precisely the content of this section.
Define for $(r,v), (q,u) \in \R^{2d}$ and $\psi \in \mathrm{Lip}(\R^{2d})$
\[
 (\mathcal{B}\psi)(r,v;q,u) = \sigma(|v-u|)\beta(r-q)(\mathcal{L}\psi)(r,v;u).
\]
Then there exists a constant $C > 0$ such that
for each $\psi \in \mathrm{Lip}(\R^{2d})$ one has
\begin{align}\label{EQ:04}
 |(\mathcal{B}\psi)(r,v;q,u)| \leq C |v - u| \sigma(|v-u|) \| \psi \|_{\mathrm{Lip}}.
\end{align}
The next result is crucial for estimating weak solutions to the Enskog equation.
\begin{Proposition}\label{FPEUNIQ:LEMMA00}
 Fix $T > 0$ and let 
 $(\mu_t)_{t \in [0,T]} \subset \mathcal{P}(\R^{2d})$ satisfy 
 \begin{align}\label{EQ:02}
  \sup \limits_{t \in [0,T]} \int_{\R^{2d}}|v|^2 \mu_t(dr,dv) < \infty.
 \end{align}
 If $(\mu_t)_{t \in [0,T]}$ is a weak solution to the Enskog equation,
 then 
 \begin{align}\label{FPEUNIQ:EQ00}
  \langle \psi, \mu_t \rangle = \langle S(t)\psi, \mu_0 \rangle + \int_{0}^{t}\langle \mathcal{B}S(t-s)\psi, \mu_s \otimes \mu_s \rangle ds, \ \ t \in [0,T].
 \end{align}
 holds for each $\psi \in \mathrm{Lip}(\R^{2d})$.
\end{Proposition}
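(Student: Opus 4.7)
The plan is to establish \eqref{FPEUNIQ:EQ00} first for smooth test functions $\psi \in C_b^1(\R^{2d})$ by a Duhamel-type time-discretization, and then remove the regularity by density, approximating $\psi \in \mathrm{Lip}(\R^{2d})$ by mollified functions with controlled Lipschitz constant. The motivation is that the flow $S(t)$ exactly encodes free transport, so replacing $\psi$ by the time-dependent test function $\phi_s := S(t-s)\psi$ should make the $v\cdot\nabla_r$ part of $\mathcal{A}$ cancel with the time derivative $\partial_s \phi_s = -v\cdot\nabla_r \phi_s$, leaving only the collision operator $\mathcal{B}$.

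For $\psi \in C_b^1(\R^{2d})$, first I would verify that $\phi_s \in C_b^1(\R^{2d})$ for each $s \in [0,T]$, with sup and Lipschitz norms controlled by $(1+T)$ times those of $\psi$ via the chain rule. Given a partition $0 = s_0 < \dots < s_N = t$ of mesh $h$, applying the weak formulation \eqref{FPE:ENSKOG} to the time-independent test function $\phi_{s_k}$ on $[s_k, s_{k+1}]$ and telescoping yields
\[
\langle \psi, \mu_t\rangle - \langle S(t)\psi, \mu_0\rangle = \sum_k \int_{s_k}^{s_{k+1}}\langle \mathcal{A}\phi_{s_k}, \mu_r \otimes \mu_r\rangle dr - \sum_k \int_{s_k}^{s_{k+1}} \langle v\cdot\nabla_r \phi_r , \mu_{s_{k+1}}\rangle dr,
\]
using $\phi_{s_{k+1}} - \phi_{s_k} = -\int_{s_k}^{s_{k+1}} v \cdot \nabla_r \phi_r \, dr$ and Fubini. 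Letting $h \to 0$ and using the decomposition $\mathcal{A} = v \cdot \nabla_r + \mathcal{B}$ together with the observation that $v\cdot\nabla_r\phi$ depends only on $(r,v)$, so that $\langle v\cdot\nabla_r\phi, \mu\otimes\mu\rangle = \langle v\cdot\nabla_r\phi, \mu\rangle$ (as $\mu$ is a probability measure), the free-transport terms cancel and \eqref{FPEUNIQ:EQ00} follows for $\psi \in C_b^1(\R^{2d})$.

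To pass from $C_b^1$ to $\mathrm{Lip}(\R^{2d})$, I would mollify $\psi$ into a sequence $\psi_n \in C_b^1(\R^{2d})$ with $\|\psi_n\|_{\mathrm{Lip}} \leq \|\psi\|_{\mathrm{Lip}}$ and $\psi_n \to \psi$ locally uniformly, then take $n \to \infty$ in \eqref{FPEUNIQ:EQ00} applied to $\psi_n$. Convergence of the left-hand side follows from local uniform convergence combined with the finite first moment of $\mu_t$ and $S(t)^*\mu_0$ (to handle the linear growth $|\psi_n(x)| \leq |\psi_n(0)| + \|\psi\|_{\mathrm{Lip}}|x|$ uniformly in $n$). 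On the right-hand side, combining \eqref{EQ:04} with the bound $\|S(t-s)\psi_n\|_{\mathrm{Lip}} \leq (1+T)\|\psi\|_{\mathrm{Lip}}$ dominates the integrand by $C(1+T)\|\psi\|_{\mathrm{Lip}}|v-u|\sigma(|v-u|)$, which is integrable against $ds \otimes \mu_s \otimes \mu_s$ by the integrability hypothesis \eqref{INTRO:02}; dominated convergence then concludes.

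I expect the main technical obstacle to be the $h \to 0$ limit in the time-discretization, which requires a weak-in-$s$ continuity of $s \mapsto \mu_s$ tested against functions of linear velocity growth, namely $v \cdot \nabla_r \phi_r$. This should follow from the second-moment hypothesis \eqref{EQ:02} together with the weak formulation \eqref{FPE:ENSKOG} applied through a smooth cutoff $\chi_R(v)$ and a standard uniform-integrability argument; the same second-moment bound together with \eqref{eq: estimate A} also guarantees the Riemann-sum convergence of $\sum_k (s_{k+1}-s_k)\langle \mathcal{A}\phi_{s_k}, \mu_{s_k} \otimes \mu_{s_k}\rangle$ to $\int_0^t \langle \mathcal{A}\phi_s, \mu_s \otimes \mu_s\rangle ds$, thereby legitimating the cancellation of transport terms.
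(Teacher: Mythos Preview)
Your proposal is correct and follows essentially the same idea as the paper: both establish \eqref{FPEUNIQ:EQ00} by making rigorous the formal product rule for $s \mapsto \langle S(t-s)\psi, \mu_s\rangle$, so that the transport part of $\mathcal{A}$ cancels against $\partial_s S(t-s)\psi$, and both then extend from smooth to Lipschitz $\psi$ by approximation with uniformly controlled Lipschitz constants. The execution differs slightly: the paper differentiates $s \mapsto \langle S(t-s)\psi, \mu_s\rangle$ pointwise (using a Taylor remainder for the difference quotient of $S(t-(s+h))\psi$, which is why it takes $\psi \in C_b^2$), whereas you discretize in time and telescope, which only needs continuity of $s \mapsto \mathcal{A}\phi_s$ and hence works for $\psi \in C_b^1$. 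In both cases the genuine obstacle is exactly the one you identify --- continuity of $s \mapsto \mu_s$ tested against the linearly growing function $v\cdot\nabla_r\phi_r$ --- and both resolve it the same way, via a smooth velocity cutoff together with the uniform second-moment bound \eqref{EQ:02}. One small remark: pure mollification of $\psi \in \mathrm{Lip}(\R^{2d})$ does not land in $C_b^1$ since $\psi$ may be unbounded; you need to combine it with a spatial truncation (as is implicit in the paper's claim), which yields $\psi_n \in C_b^1$ with $\sup_n \|\psi_n\|_{\mathrm{Lip}} < \infty$ rather than $\leq \|\psi\|_{\mathrm{Lip}}$, but this is all that your dominated-convergence argument requires.
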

\begin{proof}
  Fix $\psi \in C_b^2(\R^{2d})$ and $t \in (0,T]$.
 Let us show that the function $[0,t] \ni s \longmapsto \langle S(t-s)\psi,\mu_s\rangle$ is absolutely continuous and for a.a. $s \in [0,t)$ it holds that
 \begin{align}\label{FPEUNIQ:EQ01}
  \frac{d}{ds}\langle S(t-s)\psi, \mu_s\rangle = \langle \mathcal{B}S(t-s)\psi, \mu_s \otimes \mu_s \rangle.
 \end{align}
 In such a case, using \eqref{EQ:02} and \eqref{EQ:04},
 we may integrate \eqref{FPEUNIQ:EQ01} over $[0,t]$ which would readily yield \eqref{FPEUNIQ:EQ00} for $\psi \in C_b^2(\R^{2d})$.
 If $\psi \in \mathrm{Lip}(\R^{2d})$ then we may find a sequence of functions $\psi_n \in C_b^2(\R^{2d})$ such that
 $\sup_{n \in \N} \| \psi_n \|_{\mathrm{Lip}} < \infty$
 and $\psi_n \longrightarrow \psi$ pointwise.
 Hence passing to the limit $n \to \infty$ proves that \eqref{FPEUNIQ:EQ00} also holds for $\psi \in \mathrm{Lip}(\R^{2d})$. 
 
 Arguing in this way, it remains to prove \eqref{FPEUNIQ:EQ01} for each $\psi \in C_b^2(\R^{2d})$ and fixed $t > 0$.
 Take $s \in [0,t)$ and let $h \in \R$ with $|h| \leq (t-s) \wedge 1$.
 Write 
 \begin{align*}
  &\ \frac{ \langle S(t - (s+h))\psi, \mu_{s+h} \rangle - \langle S(t-s)\psi, \mu_s \rangle}{h}
  \\ &= \left \langle \frac{S(t - (s+h))\psi - S(t-s)\psi}{h}, \mu_{s+h}\right \rangle
  + \frac{ \langle S(t-s)\psi, \mu_{s+h} \rangle - \langle S(t-s)\psi, \mu_s\rangle}{h}.
 \end{align*}
 Since $(\mu_t)_{t \geq 0}$ satisfies the Enskog equation
 and $S(t-s)\psi \in C_b^2(\R^{2d})$ we conclude that
 \begin{align}\label{EQ:09}
  \frac{ \langle S(t-s)\psi, \mu_{s+h} \rangle - \langle S(t-s)\psi, \mu_s\rangle}{h} \longrightarrow \langle \mathcal{A}S(t-s)\psi, \mu_s \otimes \mu_s \rangle, \qquad h \to 0
 \end{align}
 for a.a. $s \in [0,t)$. Next we will prove that
 \begin{align}\label{EQ:10}
  \left \langle \frac{S(t - (s+h))\psi - S(t-s)\psi}{h}, \mu_{s+h}\right \rangle \longrightarrow - \int_{\R^{2d}} v \cdot (\nabla_r S(t-s)\psi)(r,v) \mu_s(dr,dv)
 \end{align}
 as $h \to 0$. Combining then \eqref{EQ:09}, \eqref{EQ:10} and using the definition of $\mathcal{A}$ and $\mathcal{B}$ proves \eqref{FPEUNIQ:EQ01}.
 In order to prove \eqref{EQ:10} we let 
 \[
  f_h(r,v) = S(t - (s+h))\psi(r,v) = \psi( r + ( t - s - h)v,v)
 \]
 and denote the corresponding pointwise derivative with respect to $h$ at $h = 0$ by
 \begin{align*}
  f'_0(r,v) &= - v \cdot (\nabla_r \psi)(r + (t-s)v,v).
 \end{align*}
 For $R > 0$ take a smooth function $\varphi_R$ on $\R^d$ such that
 $\1_{ [0,R]}(|v|) \leq \varphi_R(v) \leq \1_{[0,2R]}(|v|)$.
 Then 
  \begin{align*}
  &\ \left \langle \frac{S(t - (s+h))\psi - S(t-s)\psi}{h}, \mu_{s+h}\right \rangle 
  \\ &= \left \langle \frac{f_h - f_0}{h} - f_0', \mu_{s+h} \right \rangle + \langle f_0' , \mu_{s+h} \rangle
  \\ &= \left \langle \frac{f_h - f_0}{h} - f_0', \mu_{s+h} \right \rangle + \langle f_0' \varphi_R , \mu_{s+h} - \mu_s \rangle
  + \langle f_0' (1 - \varphi_R), \mu_{s+h} - \mu_s \rangle
  + \langle f_0', \mu_s \rangle
  \\ &= J_1 + J_2 + J_3 + J_4.
 \end{align*}
 Using the estimate
 \begin{align*}
  \left| \frac{f_h(r,v) - f_0(r,v)}{h} - f_0'(r,v)\right| \leq \frac{|h|}{2} |v|^2 \| \psi \|_{C_b^2}
 \end{align*}
 we obtain
 \begin{align*}
  |J_1| &\leq \frac{|h|}{2}\| \psi\|_{C_b^2} \sup \limits_{|h| \leq (t-s)\wedge 1} \int_{\R^{2d}} |v|^2 \mu_{s+h}(dr,dv).
 \end{align*}
 Similarly, using $|f_0'(r,v)| \leq |v| \| \psi \|_{C_b^1}$
 gives $f_0' \varphi_R \in C_b(\R^{2d})$ and hence we obtain
 \[
  \lim_{h \to 0}J_2 = 0, \qquad \forall R > 0 and s > 0,
 \]
 since $\R_+ \ni s \longmapsto \langle f_0' \varphi_R, \mu_{s} \rangle$ is continuous which essentially follows from \eqref{eq: estimate A} combined with \eqref{FPE:ENSKOG}, and a standard approximation of $C_b^1(\R^{2d})$ functions by $C_b(\R^{2d})$ functions. For $J_3$ we use
 $1 - \varphi_R(v) \leq \1_{(R,\infty)}(|v|)$
 so that
 \begin{align*}
  |J_3| &\leq \| \psi \|_{C_b^1} \int_{|v| > R}|v| ( \mu_{s+h}(dr,dv) + \mu_s( dr,dv))
  \\ &\leq \frac{2 \| \psi \|_{C_b^1} }{R} \sup \limits_{|h| \leq (t-s) \wedge 1} \int_{\R^{2d}}|v|^2 \mu_{s+h}(dr,dv).
 \end{align*}
 Combining these estimates yields \eqref{EQ:10} as $h \to 0$ and letting $R \to \infty$ thus completes the proof of Proposition \ref{FPEUNIQ:LEMMA00}.
\end{proof}

\section{The coupling inequality}

Below we first provide another representation for the metric $W_1^t(\mu, \nu)$ in terms of optimal couplings.
Additional details on the classical Wasserstein distance and optimal transport are given in \cite{V09}.
Let $\mu,\nu \in \mathcal{P}(\R^{2d})$. A coupling $H$ of $(\mu,\nu)$ is a probability measure on $\R^{4d}$ such that
its marginals are given by $\mu$ and $\nu$, respectively.
Let $\mathcal{H}(\mu,\nu)$ the space of all such couplings.
Define a one-parameter family of norms on $\R^{2d}$ via
\[
 |(r,v) - (\widetilde{r},\widetilde{v})|_t := |(r-vt) - (\widetilde{r} - \widetilde{v}t)| + |v- \widetilde{v}|, \ \ \ t \geq 0.
\]
Related to this family we define the Lipschitz norms 
\[
 \| \psi \|_{t} := \sup_{(r,v) \neq (\widetilde{r}, \widetilde{v})} \frac{|\psi(r,v) - \psi(\widetilde{r}, \widetilde{v})|}{|(r,v) - (\widetilde{r},\widetilde{v})|_t}, \ \ \ t \geq 0.
\]
Note that these Lipschitz norms are all equivalent.
We will use the following simple observation.
\begin{Lemma}
 Let $\mu,\nu$ be probability measures with finite first moments and take $t \geq 0$. Then there exists $H_t \in \mathcal{H}(\mu, \nu)$ such that
 \begin{align}\label{COUP:00}
  W_1^t(\mu, \nu) &= \sup\limits_{\| \psi \|_{0} \leq 1} \langle S(-t)\psi, \mu - \nu \rangle 
  \\ \notag &= \sup\limits_{\| \psi \|_{t} \leq 1} \langle \psi, \mu - \nu \rangle
  = \int_{\R^{4d}}|(r,v) - (\widetilde{r}, \widetilde{v})|_t dH_t(r,v;\widetilde{r},\widetilde{v}).
 \end{align}
\end{Lemma}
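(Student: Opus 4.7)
The statement unpacks into three equalities chained from $W_1^t(\mu,\nu)$ to the integral on the right. My plan is to treat them in order: the first is essentially by definition, the second amounts to an isometry of function spaces induced by $S(-t)$, and the third is the classical Kantorovich--Rubinstein duality applied on $\R^{2d}$ equipped with an alternative norm.

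For the first equality I would unwind the definition $W_1^t(\mu,\nu) = W_1(S(-t)^*\mu, S(-t)^*\nu)$, apply the standard Kantorovich--Rubinstein dual formulation of $W_1$ on the right-hand side (noting that $\|\cdot\|_0$ coincides with the classical Lipschitz norm $\|\cdot\|_{\mathrm{Lip}}$), and then transport the test function through the adjoint relation \eqref{EQ:06} to move $S(-t)$ back onto $\psi$.

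The crux of the argument, and the only step requiring a short calculation, is the isometry identity $\|S(-t)\psi\|_t = \|\psi\|_0$. Writing $S(-t)\psi(r,v) = \psi(r-tv,v)$ and plugging directly into the Lipschitz quotient defining $\|\cdot\|_t$, the shift inside $\psi$ is exactly compensated by the term $|(r-tv)-(\widetilde r - t\widetilde v)|$ that appears in $|(r,v)-(\widetilde r,\widetilde v)|_t$; this gives $\|S(-t)\psi\|_t \leq \|\psi\|_0$. The reverse inequality follows by running the same computation for the inverse $S(t)$ applied to $\chi := S(-t)\psi$. Consequently $S(-t)$ maps the unit ball of $\|\cdot\|_0$ bijectively onto the unit ball of $\|\cdot\|_t$, which yields the second equality in \eqref{COUP:00}.

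For the final equality I would observe that $|(r,v)-(\widetilde r,\widetilde v)|_t$ is the distance induced by the norm $|(r,v)|_t := |r-tv|+|v|$ on $\R^{2d}$, which is equivalent to the Euclidean norm (with constants depending on $t$). Hence $(\R^{2d}, |\cdot|_t)$ is a Polish space on which $\mu$ and $\nu$ have finite first moments, and the classical Kantorovich--Rubinstein theorem \cite{V09} gives equality of the supremum with the infimum over couplings of the $|\cdot|_t$-transport cost. Existence of an optimal coupling $H_t \in \mathcal{H}(\mu,\nu)$ attaining this infimum follows from lower semicontinuity of the cost together with tightness of $\mathcal{H}(\mu,\nu)$. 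Apart from verifying the isometry $\|S(-t)\psi\|_t = \|\psi\|_0$, no genuine obstacle arises.
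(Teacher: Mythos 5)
Your proof is correct and follows essentially the same route as the paper: unwind the definition of $W_1^t$ and use the adjoint relation for the first equality, establish the bijection between the $\|\cdot\|_0$- and $\|\cdot\|_t$-unit balls under $S(-t)$ for the second, and invoke Kantorovich duality with existence of an optimal coupling for the third. The only difference is cosmetic: you verify the isometry $\|S(-t)\psi\|_t = \|\psi\|_0$ by an explicit change of variables, whereas the paper states the two one-sided inclusions of the unit balls, which amounts to the same computation.
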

\begin{proof}
 The first equality in \eqref{COUP:00} follows by definition of $W_1^t$ combined with \eqref{EQ:06}. 
 For the second equality in \eqref{COUP:00} observe that, for any $\psi$ with $\| \psi \|_{0} \leq 1$, we get $\| S(-t)\psi \|_{t} \leq 1$.
 Conversely any  $\psi$ satisfying $\| \psi \|_{t} \leq 1$ 
 can be written as $\psi = S(-t)\phi$ where $\phi := S(t)\psi$ satisfies $\| \phi \|_{0} \leq 1$.
 The last equality in \eqref{COUP:00} is a particular case of the Kantorovich duality for Wasserstein distances (see e.g. \cite[Theorem 5.10]{V09}).
\end{proof}
The following is an extension of \cite[Theorem 2.2]{FM09} to the space-inhomogeneous case.
\begin{Proposition}\label{general coupling inequality}
 Take $T > 0$ and let $(\mu_t)_{t \in [0,T]}, (\nu_t)_{t \in [0,T]}$ be two weak solutions to the Enskog equation satisfying
 \[
  \sup_{t \in [0,T]}\int_{\R^{2d}}|v|^2 ( \mu_t(dr,dv) + \nu_t(dr,dv)) < \infty,
 \]
 and 
 \begin{align}\label{FPE:12}
  &\ \int_{0}^{T} \int_{\R^{4d}}(|v| + |u| + |r| + |q|)\sigma(|v-u|) 
  \\ \notag  & \qquad \qquad \qquad \left( \mu_t(dq,du)\mu_t(dr,dv) + \nu_t(dq,du)\nu_t(dr,dv)\right)dt < \infty.
 \end{align}
 For $t \in [0,T]$ let $H_t \in \mathcal{H}(\mu_t, \nu_t)$ be such that
 \begin{align}\label{FPEUNIQ:EQ02}
  W_1^t(\mu_t,\nu_t) = \int_{\R^{4d}}|(r,v) - (\widetilde{r},\widetilde{v})|_t dH_t(r,v;\widetilde{r},\widetilde{v}).
 \end{align}
 Then for all $t \in [0,T]$
 \begin{align*}
  W_1^t(\mu_t,\nu_t) &\leq W_1(\mu_0,\nu_0) + 2\kappa(1+T)\int_{0}^{t} \int_{\R^{8d}} \Psi dH_s(q,u;\widetilde{q},\widetilde{u})dH_s(r,v;\widetilde{r},\widetilde{v})ds
 \end{align*}
 where
 \begin{align*}
  \Psi &:= \left( |v-u| + |\widetilde{v} - \widetilde{u}| \right)\left| \sigma(|v-u|) \beta(r-q) - \sigma(|\widetilde{v}-\widetilde{u}|)\beta(\widetilde{r}- \widetilde{q})\right|
  \\ &+ \left( |v- \widetilde{v}| + |u - \widetilde{u}|\right) \min\{\sigma(|v-u|)\beta(r-q), \sigma(|\widetilde{v}-\widetilde{u}|)\beta(\widetilde{r}- \widetilde{q})\}.
 \end{align*}
\end{Proposition}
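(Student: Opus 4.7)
The idea is to combine the mild formulation from Proposition~\ref{FPEUNIQ:LEMMA00} with the duality representation \eqref{COUP:00} of $W_1^t$, and then reduce matters to a pointwise bound on $\mathcal{B}S(-s)\psi$ integrated against the product coupling $H_s\otimes H_s$. By \eqref{COUP:00}, it suffices to bound $\langle S(-t)\psi,\mu_t-\nu_t\rangle$ for every $\psi\in\mathrm{Lip}(\R^{2d})$ with $\|\psi\|_0\leq 1$. For such $\psi$ one has $S(-t)\psi\in\mathrm{Lip}(\R^{2d})$, so applying Proposition~\ref{FPEUNIQ:LEMMA00} separately to $(\mu_s)$ and $(\nu_s)$ with test function $S(-t)\psi$ and using the group identity $S(t-s)S(-t)=S(-s)$ yields
\[
 \langle S(-t)\psi,\mu_t-\nu_t\rangle=\langle\psi,\mu_0-\nu_0\rangle+\int_0^t\langle\mathcal{B}S(-s)\psi,\,\mu_s\otimes\mu_s-\nu_s\otimes\nu_s\rangle\,ds,
\]
and the initial term is at most $W_1(\mu_0,\nu_0)$.

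\textbf{Reduction via coupling.} For any bounded measurable $F$, the marginal property of $H_s$ gives
\[
 \langle F,\mu_s\otimes\mu_s-\nu_s\otimes\nu_s\rangle=\int_{\R^{8d}}[F(r,v;q,u)-F(\widetilde r,\widetilde v;\widetilde q,\widetilde u)]\,dH_s(r,v;\widetilde r,\widetilde v)\,dH_s(q,u;\widetilde q,\widetilde u),
\]
so the task reduces to the pointwise estimate $|(\mathcal{B}S(-s)\psi)(r,v;q,u)-(\mathcal{B}S(-s)\psi)(\widetilde r,\widetilde v;\widetilde q,\widetilde u)|\leq 2\kappa(1+s)\Psi$. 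Setting $A=\sigma(|v-u|)\beta(r-q)$, $\widetilde A=\sigma(|\widetilde v-\widetilde u|)\beta(\widetilde r-\widetilde q)$, $L=(\mathcal{L}S(-s)\psi)(r,v;u)$, $\widetilde L=(\mathcal{L}S(-s)\psi)(\widetilde r,\widetilde v;\widetilde u)$, the algebraic identity $AL-\widetilde A\widetilde L=(A-\widetilde A)L+\widetilde A(L-\widetilde L)=A(L-\widetilde L)+(A-\widetilde A)\widetilde L$ implies
\[
 |AL-\widetilde A\widetilde L|\leq|A-\widetilde A|(|L|+|\widetilde L|)+\min(A,\widetilde A)\,|L-\widetilde L|.
\]
The hypothesis $\|\psi\|_0\leq 1$ yields $\|S(-s)\psi\|_s\leq 1$ (by the second equality in \eqref{COUP:00}), which combined with \eqref{PARA:00} and $\kappa<\infty$ gives $|L|\lesssim(1+s)\kappa|v-u|$ and $|\widetilde L|\lesssim(1+s)\kappa|\widetilde v-\widetilde u|$; this supplies the first summand of $\Psi$.

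\textbf{Bounding $|L-\widetilde L|$.} Writing $\psi_s=S(-s)\psi$ and $\widetilde\alpha=\alpha(\widetilde v,\widetilde u,\theta,\xi)$,
\[
 L-\widetilde L=\int_\Xi\bigl\{[\psi_s(r,v+\alpha)-\psi_s(r,v)]-[\psi_s(\widetilde r,\widetilde v+\widetilde\alpha)-\psi_s(\widetilde r,\widetilde v)]\bigr\}Q(d\theta)d\xi.
\]
The integrand is estimated by exploiting the second-order cancellation at matched collision increments and by controlling the residual $|\alpha-\widetilde\alpha|$-type term through the standard averaging bound
\[
 \int_{S^{d-2}}|\Gamma(u-v,\xi)-\Gamma(\widetilde u-\widetilde v,\xi)|\,d\xi\leq C(|v-\widetilde v|+|u-\widetilde u|)
\]
for the direction map $\Gamma$ (cf.\ \cite{FM09,LM12}). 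Together with \eqref{FPE:00} and $\sin(\theta/2)\leq\theta/2$, this produces a $\theta$-integrable bound of order $\theta(|v-\widetilde v|+|u-\widetilde u|)$, so integration against $Q$ delivers $|L-\widetilde L|\lesssim(1+s)\kappa(|v-\widetilde v|+|u-\widetilde u|)$, which supplies the second summand of $\Psi$. Using $s\leq T$ and integrating over $s\in[0,t]$ completes the argument; conditions \eqref{EQ:02} and \eqref{FPE:12} justify the Fubini interchanges at the coupling step.

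\textbf{Main obstacle.} The delicate step is the $|L-\widetilde L|$ estimate. Because $\psi$ is only Lipschitz, a second-order Taylor expansion is unavailable, and the na\"ive bound $|L-\widetilde L|\leq|L|+|\widetilde L|$ would produce only $(|v-u|+|\widetilde v-\widetilde u|)$ rather than the finer $(|v-\widetilde v|+|u-\widetilde u|)$ factor required in $\Psi$. Extracting this sharper scaling forces one to use the mixed second-order cancellation at fixed $(\theta,\xi)$ together with $\xi$-averaging of $\Gamma(u-v,\xi)-\Gamma(\widetilde u-\widetilde v,\xi)$; this is the space-inhomogeneous counterpart of the key estimate in \cite{FM09}, with the new ingredient being that the free-transport shift $S(-s)$ is absorbed by replacing $\|\cdot\|_0$ with the shifted norm $\|\cdot\|_s$ at every step.
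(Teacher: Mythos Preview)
There is a genuine gap in your argument, and it lies precisely at the step you yourself flag as the ``main obstacle'': the estimate $|L-\widetilde L|\lesssim(1+s)\kappa(|v-\widetilde v|+|u-\widetilde u|)$ is \emph{false} for general $\psi$ with $\|\psi\|_0\le 1$. To see this, take $\psi_s(r,v)=g(r-vs)$ for a nonlinear $1$-Lipschitz $g$; then $\psi_s(r,v+\alpha)-\psi_s(r,v)=g((r-vs)-\alpha s)-g(r-vs)$ depends on $r-vs$ through the nonlinearity of $g$. With $v=\widetilde v$ and $u=\widetilde u$ (hence $\alpha=\widetilde\alpha$) but $r-vs\neq\widetilde r-\widetilde v s$, one has $L\neq\widetilde L$ while the right-hand side of your claimed bound vanishes. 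No amount of $\xi$-averaging of $\Gamma$ can repair this, because the obstruction comes from the $r$-dependence of $\psi_s$, not from the collision increment. (Incidentally, the bound you attribute to \cite{FM09,LM12} is not the one proved there: those references give the Tanaka-type pointwise estimate recalled in Lemma~\ref{PARAMETARIZATION}, namely $|\Gamma(X,\xi)-\Gamma(Y,\xi_0(X,Y,\xi))|\le 3|X-Y|$ after a measure-preserving change of variable $\xi\mapsto\xi_0$, not an averaged estimate at the \emph{same} $\xi$.)

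The paper circumvents this obstruction by a different mechanism. After the Tanaka substitution $\widetilde\alpha\mapsto\widetilde\alpha_0$, the ``second-order difference'' is \emph{not} bounded pointwise; instead one uses $\psi_s(r,v+\alpha)-\psi_s(\widetilde r,\widetilde v+\widetilde\alpha_0)\le |(r,v)-(\widetilde r,\widetilde v)|_s+(1+s)|\alpha-\widetilde\alpha_0|$, producing a good piece $(1+s)|\alpha-\widetilde\alpha_0|$ (controlled by \eqref{EQ:13}, yielding the second summand of $\Psi$) and a \emph{leftover}
\[
 (\sigma\beta\wedge\widetilde\sigma\widetilde\beta)\bigl\{|(r,v)-(\widetilde r,\widetilde v)|_s-[\psi_s(r,v)-\psi_s(\widetilde r,\widetilde v)]\bigr\},
\]
which is nonnegative but not small pointwise. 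The key is that after integration against $dH_s^0\,dH_s^1$ and \emph{using the optimality of $H_s$ for $|\cdot|_s$} (hypothesis~\eqref{FPEUNIQ:EQ02}), the first part becomes exactly $W_1^s(\mu_s,\nu_s)$ and the second part becomes $\langle S(-s)\psi,\mu_s-\nu_s\rangle$. Since the weight $\sigma\beta\wedge\widetilde\sigma\widetilde\beta$ is unbounded, one truncates at level $N$ (error $g_N\to 0$ as $N\to\infty$ by~\eqref{FPE:12}) and regularises $Q$ near $\theta=0$ (error $c_\varepsilon h\to 0$ as $\varepsilon\to 0$). One is then left with an inequality of the form
\[
 \langle S(-t)\psi,\mu_t-\nu_t\rangle\le(\text{desired bound})+N\kappa_\varepsilon\!\int_0^t\! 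W_1^s(\mu_s,\nu_s)\,ds-N\kappa_\varepsilon\!\int_0^t\!\langle S(-s)\psi,\mu_s-\nu_s\rangle\,ds,
\]
which is handled by multiplying by $e^{N\kappa_\varepsilon t}$, taking the supremum over $\psi$, and applying Gronwall; in the limit $N\to\infty$, $\varepsilon\to 0$ the extra terms disappear. Your plan skips this entire layer, and without it the stated inequality with exactly this $\Psi$ cannot be obtained from a pointwise estimate on $\mathcal{B}S(-s)\psi$.
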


In order to prove this result we need continuity properties for the collision integral, 
i.e. to compare \eqref{FPE:00} for different values of $u,v$. It was already pointed out by Tanaka that $(u,v) \longmapsto \alpha(v,u,\theta,\xi)$ cannot be smooth for any choice of $(\theta,\xi)$. Using th parameterization \eqref{PARA:01}
Tanaka \cite[Lemma 3.1]{T78} has shown that if we allow to shift the angles $\xi$ in a suitable way, then a weaker form of continuity holds. The latter estimate is sufficient for this work.
Below we recall Tanaka's result for arbitrary dimension $d \geq 3$ which is due to \cite{FM09}.
\begin{Lemma}\cite[Lemma 3.1]{FM09}\label{PARAMETARIZATION}
 There exists a measurable map $\xi_0: \R^d \times \R^d \times S^{d-2} \longrightarrow S^{d-2}$ such that for any $X,Y \in \R^d\backslash \{0\}$,
 the map $\xi \longmapsto \xi_0(X,Y,\xi)$ is a bijection with jacobian $1$ from $S^{d-2}$ onto itself, and
 \[
  |\Gamma(X, \xi) - \Gamma(Y, \xi_0(X,Y,\xi))| \leq 3 |X-Y|, \qquad  \xi \in S^{d-2}.
 \]
\end{Lemma}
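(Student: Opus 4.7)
My approach combines the dual representation $W_1^t(\mu_t,\nu_t) = \sup_{\|\psi\|_0 \leq 1}\langle S(-t)\psi, \mu_t - \nu_t\rangle$ from the preceding lemma with the mild formulation of Proposition \ref{FPEUNIQ:LEMMA00} and the optimal coupling $H_s$ given by \eqref{FPEUNIQ:EQ02}. Fix $\psi$ with $\|\psi\|_0 \leq 1$. Applying Proposition \ref{FPEUNIQ:LEMMA00} to each of $\mu_t$ and $\nu_t$ with test function $S(-t)\psi \in \mathrm{Lip}(\R^{2d})$, and using the semigroup identity $S(t-s)S(-t) = S(-s)$, subtraction yields
\[
\langle S(-t)\psi,\mu_t - \nu_t\rangle = \langle \psi,\mu_0 - \nu_0\rangle + \int_0^t \langle \mathcal{B}S(-s)\psi,\,\mu_s \otimes \mu_s - \nu_s \otimes \nu_s\rangle\,ds.
\]
The initial term is at most $W_1(\mu_0,\nu_0)$ after the supremum over $\psi$. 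The hypotheses \eqref{EQ:02} and \eqref{FPE:12} let me invoke Fubini to rewrite the inner integral, for $\Phi := \mathcal{B}S(-s)\psi$, as an integral of $\Phi(r,v;q,u) - \Phi(\widetilde r,\widetilde v;\widetilde q,\widetilde u)$ against $dH_s(q,u;\widetilde q,\widetilde u)\,dH_s(r,v;\widetilde r,\widetilde v)$.

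I next factor $\Phi = A\cdot L$ with $A(r,v;q,u) := \sigma(|v-u|)\beta(r-q) \geq 0$ and $L(r,v;u) := \mathcal{L}S(-s)\psi(r,v;u)$. The elementary inequality
\[
|AL - \widetilde A\widetilde L| \leq |A - \widetilde A|\max\{|L|,|\widetilde L|\} + \min\{A,\widetilde A\}|L - \widetilde L|,
\]
valid for nonnegative $A,\widetilde A$, is tailored to match the two summands of $\Psi$. The shifted Lipschitz estimate $\|S(-s)\psi\|_0 \leq (1+s)\|\psi\|_0$, combined with \eqref{PARA:00} and $\sin(\theta/2) \leq \theta/2$, yields $\max\{|L|,|\widetilde L|\} \leq C(1+T)\kappa\,(|v-u|+|\widetilde v-\widetilde u|)\|\psi\|_0$. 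Multiplying by $|A-\widetilde A|$ and integrating against $H_s \otimes H_s$ produces precisely the first summand of $\Psi$ with prefactor of order $\kappa(1+T)$.

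The core step and the main obstacle is to bound $|L - \widetilde L|$ by a multiple of $(|v-\widetilde v|+|u-\widetilde u|)$ rather than of $(|v-u|+|\widetilde v-\widetilde u|)$. Here Lemma \ref{PARAMETARIZATION} enters: substituting $\xi \mapsto \xi_0(v-u,\widetilde v-\widetilde u,\xi)$ in the integral defining $\widetilde L$ (the substitution has Jacobian one) pairs $\alpha$ with $\widetilde\alpha_0 := \alpha(\widetilde v,\widetilde u,\theta,\xi_0)$ satisfying
\[
|\alpha - \widetilde\alpha_0| \leq C\theta\,(|v-\widetilde v|+|u-\widetilde u|)
\]
by \eqref{FPE:00}. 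The resulting per-angle integrand
\[
[\varphi(r,v+\alpha) - \varphi(r,v)] - [\varphi(\widetilde r,\widetilde v+\widetilde\alpha_0) - \varphi(\widetilde r,\widetilde v)], \qquad \varphi := S(-s)\psi,
\]
is then estimated as the minimum of two complementary bounds: the triangle-inequality bound $(1+s)\|\psi\|_0(|\alpha|+|\widetilde\alpha_0|)$, which is effective for small $\theta$, and the Tanaka-shifted bound built from $|\alpha - \widetilde\alpha_0|$, which is effective for large $\theta$. The delicate point is that a naive global shifted-metric Lipschitz bound on the double difference produces a residual $|(r,v)-(\widetilde r,\widetilde v)|_s$ contribution which, being $\theta$-independent, is not $Q$-integrable over $(0,\pi]$; the resolution is to split the $\theta$-integral at a suitable threshold and to route any surviving diagonal terms back into the first summand of $\Psi$ by means of the inequality $\min\{A,\widetilde A\}\,\chi \leq |A-\widetilde A|\,\chi + \widetilde A \cdot \chi$ applied to the offending factor. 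After integrating against $Q(d\theta)d\xi$ and using $\int_0^\pi \theta\,Q(d\theta) = \kappa$ together with $(1+s) \leq 1+T$, I obtain the second summand of $\Psi$.

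Taking the supremum over $\psi$ with $\|\psi\|_0 \leq 1$ concludes the proof, with the constant $2\kappa(1+T)$ absorbing all remaining numerical factors; a mollification argument analogous to the one at the end of the proof of Proposition \ref{FPEUNIQ:LEMMA00} reduces the case of general Lipschitz $\psi$ to smooth $\psi$, for which the per-angle estimates above are rigorously justified.
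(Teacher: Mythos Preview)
Your proposal does not address the stated lemma at all. The lemma you are asked to prove is Lemma~\ref{PARAMETARIZATION}: the \emph{existence} of the measurable map $\xi_0:\R^d\times\R^d\times S^{d-2}\to S^{d-2}$ which is a Jacobian-one bijection in the third variable and satisfies $|\Gamma(X,\xi)-\Gamma(Y,\xi_0(X,Y,\xi))|\leq 3|X-Y|$. Instead, you have sketched a proof of Proposition~\ref{general coupling inequality} (the coupling inequality for $W_1^t$), and in that sketch you explicitly \emph{invoke} Lemma~\ref{PARAMETARIZATION} as an already-known tool (``Here Lemma~\ref{PARAMETARIZATION} enters: substituting $\xi\mapsto\xi_0(\dots)$\dots''). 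Using a lemma is not the same as proving it.

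For the record, the paper does not supply a proof of Lemma~\ref{PARAMETARIZATION} either: it is quoted directly from \cite[Lemma~3.1]{FM09} (extending Tanaka's three-dimensional construction \cite[Lemma~3.1]{T78} to $d\geq 3$), and the argument there is geometric---one builds $\xi_0$ by choosing, for each pair $X,Y$, a rotation of $S^{d-2}$ that aligns the frames used to define $\Gamma(X,\cdot)$ and $\Gamma(Y,\cdot)$. Nothing in your write-up touches this construction. If your intent was actually to prove Proposition~\ref{general coupling inequality}, say so and resubmit under that heading; as a proof of Lemma~\ref{PARAMETARIZATION} the proposal is simply off-target.
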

 With this parameterization we obtain from Lemma \ref{PARAMETARIZATION}, for all $u,v, \widetilde{u}, \widetilde{v} \in \R^d$, all $\theta \in [0,\pi]$ and all $\xi \in S^{d-2}$, we have the inequality
 \begin{align}\label{EQ:13}
  |\alpha(v,u,\theta, \xi) - \alpha(\widetilde{v}, \widetilde{u}, \theta, \xi_0(v-u, \widetilde{v} - \widetilde{u}, \xi))| \leq 2 \theta\left( |v - \widetilde{v}| + |u -  \widetilde{u}|\right).
 \end{align}
 We are now prepared to prove our main coupling inequality of this section.
\begin{proof}[Proof of Proposition \ref{general coupling inequality}]
 Take $\psi \in \mathrm{Lip}(\R^{2d})$ with $\Vert \psi \Vert_{0} \leq 1$. By Proposition \ref{FPEUNIQ:LEMMA00}
 $(\mu_t)_{t \in [0,T]}$ and $(\nu_t)_{t \in [0,T]}$ also satisfy \eqref{FPEUNIQ:EQ00}.
 To shorten notation we let $\alpha = \alpha(v,u,\theta,\xi)$, $\sigma = \sigma(|v-u|)$, $\beta = \beta(r-q)$ 
 and likewise $\widetilde{\alpha}, \widetilde{\sigma}, \widetilde{\beta}$ with $(r,v),(q,u)$ replaced by $(\widetilde{r},\widetilde{v}), (\widetilde{q},\widetilde{u})$.
 Moreover, let $dH_s^0 = dH_s(q,u;\widetilde{q},\widetilde{u})$
 and $dH_s^1 = dH_s(r,v;\widetilde{r},\widetilde{v})$. 
 Using \eqref{FPEUNIQ:EQ00}, $H_s \in \mathcal{H}(\mu_s,\nu_s)$, 
 the definition of $\mathcal{B}$ and finally
 $x = x \wedge y + (x-y)_+$, for $x,y \geq 0$ with $x_+ := \max\{x, 0\}$, we obtain
 \begin{align*}
  &\ \langle S(-t)\psi, \mu_t - \nu_t \rangle - \langle \psi, \mu_{0} - \nu_{0} \rangle 
  \\ &= \int_{0}^{t}\bigg\{\langle \mathcal{B}S(-s)\psi, \mu_s \otimes \mu_s \rangle - \langle \mathcal{B}S(-s)\psi, \nu_s \otimes \nu_s \rangle \bigg\} ds
  \\ &= \int_{0}^{t}\int_{\R^{8d}}\bigg \{ (\mathcal{B}S(-s)\psi)(r,v;q,u) 
  \\ &\hskip30mm - (\mathcal{B}S(-s)\psi)(\widetilde{r},\widetilde{v};\widetilde{q},\widetilde{u}) \bigg \}dH_s(q,u,\widetilde{q},\widetilde{u})dH_s(r,v;\widetilde{r},\widetilde{v}) ds.
  \\ &= \int_{0}^{t}\int_{\R^{8d} \times \Xi} \bigg\{ \left( S(-s)\psi(r,v+\alpha) - S(-s)\psi(r,v)\right)\sigma \beta
  \\ &\hskip30mm - \left( S(-s)\psi(\widetilde{r}, \widetilde{v} + \widetilde{\alpha}) - S(-s)\psi(\widetilde{r},\widetilde{v})\right)\widetilde{\sigma}\widetilde{\beta}\bigg\} dQd\xi dH_s^0 dH_s^1 ds
  \\ &= \int_{0}^{t}\int_{\R^{8d}\times \Xi}\left(\sigma \beta \wedge \widetilde{\sigma}\widetilde{\beta}\right) S(-s)\bigg\{ \psi(r,v+\alpha) - \psi(\widetilde{r}, \widetilde{v} + \widetilde{\alpha}) 
  \\ &\hskip60mm - \psi(r,v) + \psi(\widetilde{r},\widetilde{v})\bigg\} dQd\xi dH_s^0dH_s^1 ds
  \\ &\ \ \ + \int_{0}^{t}\int_{\R^{8d}\times \Xi}\left( \sigma \beta - \widetilde{\sigma}\widetilde{\beta}\right)_+ \left( S(-s)\psi(r,v+\alpha) - S(-s)\psi(r,v)\right)dQd\xi dH_s^0 dH_s^1 ds
  \\ &\ \ \ - \int_{0}^{t}\int_{\R^{8d}\times \Xi}\left( \widetilde{\sigma}\widetilde{\beta} - \sigma\beta \right)_+ \left( S(-s)\psi(\widetilde{r}, \widetilde{v} + \widetilde{\alpha}) - S(-s)\psi(\widetilde{r}, \widetilde{v}) \right) dQd\xi dH_s^0 dH_s^1 ds
  \\ &=: J_1 + J_2 + J_3.
 \end{align*}
 Note that, by \eqref{PARA:00},
  we have $|(0,\alpha)|_s \leq (1+s)|\alpha| \leq (1+T)\theta |v-u|$
  where $0$ denotes the zero vector in $\R^d$.
  Analogously we obtain
 $|(0,\widetilde{\alpha})|_s \leq (1+T)\theta |\widetilde{v} - \widetilde{u}|$. 
 Hence using $\Vert S(-s)\psi \Vert_{s} \leq 1$ we get
 \begin{align*}
  J_2 + J_3 &\leq \int_{0}^{t} \int_{\R^{8d}}\int_{\Xi}\left( |(0,\alpha)|_s + |(0, \widetilde{\alpha})|_s \right) | \sigma \beta - \widetilde{\sigma} \widetilde{\beta}| dQ d\xi dH_s^0 dH_s^1 ds
 \\ &\leq \kappa (1+T) \int_{0}^{t}\int_{\R^{8d}}\left( |v-u| + |\widetilde{v} - \widetilde{u}| \right) | \sigma \beta - \widetilde{\sigma} \widetilde{\beta}| dH_s^0dH_s^1 ds.
 \end{align*}
 For $\e \in (0,\pi)$ let $\Xi_{\e} := [\e, \pi] \times S^{d-2}$ and $\Xi_{\e}^c = (0,\e) \times S^{d-2}$.
 Setting $\widetilde{\alpha}_0 := \alpha(\widetilde{v},\widetilde{u},\xi_0(v-u, \widetilde{v} - \widetilde{u},\xi))$ observe that, by Lemma \ref{PARAMETARIZATION} the function $\xi \longmapsto \xi_0(v-u,\widetilde{v} - \widetilde{u},\xi)$ has jacobian equal to $1$ so that we are allowed to insert it into the integral. This gives
 \begin{align*}
  J_1 &= \int_{0}^{t}\int_{\Xi \times \R^{8d}}\left( \sigma \beta \wedge  \widetilde{\sigma}\widetilde{\beta} \right) S(-s)\bigg\{ \psi(r,v+\alpha) - \psi(\widetilde{r}, \widetilde{v} + \widetilde{\alpha}_0) 
  \\ &\hskip60mm  - \psi(r,v) + \psi(\widetilde{r}, \widetilde{v})\bigg\}dQd\xi dH_s^0 dH_s^1 ds
  \\ &\leq (1 + T) \int_{0}^{t}\int_{\R^{8d}\times \Xi_{\e}^c} \left( \sigma \beta \wedge \widetilde{\sigma}\widetilde{\beta}\right) \left( |\alpha| + |\widetilde{\alpha}_0|\right)dQd\xi  dH_s^0dH_s^1ds
  \\ &+ \int_{0}^{t}\int_{\R^{8d}\times \Xi_{\e}} \left( \sigma \beta \wedge \widetilde{\sigma}\widetilde{\beta}\right)\bigg\{ |(r,v + \alpha) - (\widetilde{r},\widetilde{v} + \widetilde{\alpha}_0)|_s  
  \\ &\hskip50mm - |(r,v) - (\widetilde{r},\widetilde{v})|_s \bigg\} dQd\xi dH_s^0 dH_s^1 ds
  \\ &+ \int_{0}^{t}\int_{\R^{8d}\times \Xi_{\e}} \left( \sigma \beta \wedge \widetilde{\sigma}\widetilde{\beta}\right) \bigg\{ |(r,v) - (\widetilde{r},\widetilde{v})|_s 
  \\ &\hskip50mm- S(-s)\left( \psi(r,v) - \psi(\widetilde{r}, \widetilde{v})\right) \bigg\} dQd\xi dH_s^0 dH_s^1 ds
  \\ &=: J_1^{(1)} + J_1^{(2)} + J_1^{(3)}.
 \end{align*}
 Using $|(0,\alpha) - (0,\widetilde{\alpha})|_s \leq (1+T)|\alpha - \widetilde{\alpha}_0|$ and then \eqref{EQ:13} gives
 \begin{align*}
  J_1^{(2)}  &\leq \int_{0}^{t} \int_{\R^{8d} \times \Xi_{\e}} \left( \sigma \beta \wedge \widetilde{\sigma}\widetilde{\beta}\right) | (0,\alpha) - (0,\widetilde{\alpha}_0) |_s dQ d\xi dH_s^0 dH_s^1 ds
\\ &\leq (1+T) \int_{0}^{t} \int_{\R^{8d} \times \Xi_{\e}}\left( \sigma \beta \wedge \widetilde{\sigma}\widetilde{\beta}\right) | \alpha - \widetilde{\alpha}_0| dQd\xi dH_s^0 dH_s^1 ds
  \\ &\leq 2 \kappa (1+T) \int_{0}^{t}\int_{\R^{8d}} \left( \sigma \beta \wedge \widetilde{\sigma}\widetilde{\beta}\right) \left( |v - \widetilde{v}| + |u - \widetilde{u}| \right) dH_s^0 dH_s^1 ds.
 \end{align*}
  Setting $c_{\e} := \int_{\Xi_{\e}^c}\theta dQ(\theta)d\xi$, we obtain from the basic estimate $|\alpha(u,v,\theta,\xi)| \leq \theta|u-v|$ and $H_s^0, H_s^1 \in \mathcal{H}(\mu_s,\nu_s)$
 \begin{align*}
  J_1^{(1)} &\leq (1+T)c_{\e} \int_{0}^{t}\int_{\R^{8d}} \left( \sigma \beta \wedge \widetilde{\sigma}\widetilde{\beta}\right) \left( |v - u| + |\widetilde{v} - \widetilde{u}|\right) dH_s^0 dH_s^1 ds
  \\ &\leq (1+T)c_{\e} \int_{0}^{t}\int_{\R^{8d}} \sigma \beta ( |v| + |u| ) dH_s^0 dH_s^1 ds 
  \\ &\ \ \ + (1+T)c_{\e} \int_{0}^{t}\int_{\R^{8d}} \widetilde{\sigma} \widetilde{\beta} ( |\widetilde{v}| + |\widetilde{u}| ) dH_s^0 dH_s^1 ds
  \\ &= (1+T)c_{\e} \int_{0}^{t}\int_{\R^{4d}} \sigma \beta \left( |v| + |u| \right)\left( d\mu_s(q,u)d\mu_s(r,v) + d\nu_s(q,u)d\nu_s(r,v)\right)ds 
  \\ &=: c_{\e}h(t).
 \end{align*}
 For $J_1^{(3)}$ we break up the integrand into two parts, namely when $\sigma \beta < N$ and $\sigma \beta \geq N$ where $N \geq 1$.
 With $\kappa_{\e} := Q([\e,\pi))|S^{d-2}|$ we obtain
 \begin{align*}
  J_1^{(3)} &\leq N \kappa_{\e} \int_{0}^{t}\int_{\R^{8d}} \bigg\{ |(r,v) - (\widetilde{r},\widetilde{v})|_s - S(-s)\left( \psi(r,v) - \psi(\widetilde{r}, \widetilde{v})\right) \bigg\} dH_s^0 dH_s^1ds
  \\ &\ \ \ + \kappa_{\e} \int_{0}^{t}\int_{\R^{8d}} \1_{\{ \sigma \beta \geq N \} } \left( \sigma \beta \wedge \widetilde{\sigma}\widetilde{\beta}\right) \bigg\{ |(r,v) - (\widetilde{r},\widetilde{v})|_s 
  \\ & \qquad \qquad \qquad \qquad \qquad - S(-s)\left( \psi(r,v) - \psi(\widetilde{r}, \widetilde{v})\right) \bigg\} dH_s^0 dH_s^1ds
  \\ &\leq N \kappa_{\e} \int_{0}^{t} W_1^s(\mu_s, \nu_s)ds - \kappa_{\e}N \int_{0}^{t} \langle S(-s)\psi, \mu_s - \nu_s \rangle ds
  \\ &\ \ \ + 2\kappa_{\e} \int_{0}^{t}\int_{\R^{8d}} \1_{\{ \sigma \beta \geq N \} } \sigma \beta |(r,v) - (\widetilde{r},\widetilde{v})|_s dH_s^0 dH_s^1ds
 \end{align*}
 where we have used the fact that $\| S(-s)\psi \|_{s} \leq 1$,
 equation \eqref{FPEUNIQ:EQ02} and $H_s^0,H_s^1 \in \mathcal{H}(\mu_s,\nu_s)$. 
 For the last term we apply $|(r,v) - (\widetilde{r},\widetilde{v})|_s \leq (1+T)\left( |r| + |v| + |\widetilde{r}| + |\widetilde{v}|\right)$ and 
 then $H_s^0, H_s^1 \in \mathcal{H}(\mu_s,\nu_s)$ to obtain
 \begin{align*}
  J_1^{(3)} &\leq \kappa_{\e}N \int_{0}^{t}W_1^s(\mu_s, \nu_s) ds 
           - \kappa_{\e}N \int_{0}^{t} \langle S(-s)\psi, \mu_s - \nu_s \rangle ds + \kappa_{\e}g_N(t)
 \end{align*}
 where 
 \[
  g_N(t) = 2(1+T)\int_{0}^{t} \int_{\R^{4d}} \sigma \beta \1_{ \{ \sigma \beta \geq N \} } \left( |r| + |v|\right)\left( d\mu_s(q,u)d\mu_s(r,v) + d\nu_s(q,u)d\nu_s(r,v)\right)ds.
 \]
 Collecting all inequalities gives
 \begin{align*}
  \langle S(-t)\psi, \mu_t - \nu_t \rangle &\leq W_1(\mu_0, \nu_0) + 2 \kappa(1+T) \int_{0}^{t}\int_{\R^{8d}}\Psi dH_s^0 dH_s^1 ds
  \\ &\ \ \ + \int_{0}^{t}(\kappa_{\e}g_N(s) + c_{\e}h(s))ds
  + N \kappa_{\e} \int_{0}^{t}W_1^s(\mu_s,\nu_s)ds 
  \\ &\ \ \ - \kappa_{\e}N \int_{0}^{t}\langle S(-s)\psi, \mu_s - \nu_s \rangle ds.
 \end{align*}
 This yields 
 \begin{align*}
  \langle S(-t)\psi, \mu_t - \nu_t \rangle e^{\kappa_{\e}N t} &\leq W_1(\mu_0,\nu_0) + 2\kappa(1+T)\int_{0}^{t}e^{\kappa_{\e}Ns}\int_{\R^{8d}}\Psi dH_s^0 dH_s^1 ds
  \\ &\ \ \ + \int_{0}^{t}e^{\kappa_{\e}Ns}(\kappa_{\e}g_N(s) + c_{\e}h(s))ds + N \kappa_{\e} \int_{0}^{t}e^{\kappa_{\e}N s}W_1^s(\mu_s,\nu_s)ds.
 \end{align*}
 Taking the supremum over $\psi \in \mathrm{Lip}(\R^{2d})$ with $\| \psi \|_0 \leq 1$, using \eqref{COUP:00} and,
 then applying the generalized Gronwall inequality gives
 \begin{align*}
  W_1^t(\mu_t,\nu_t) &\leq W_1(\mu_0,\nu_0) + 2\kappa(1+T) \int_{0}^{t}\int_{\R^{8d}}\Psi dH_s^0 dH_s^1 ds
  \\ &\hskip30mm +  \int_{0}^{t}\left( \kappa_{\e}g_N(s) + c_{\e}h(s)\right)ds.
 \end{align*}
 Taking first $N \to \infty$ and then $\e \to 0$ proves the assertion.
\end{proof}

\section{Proof of Theorem \ref{FPE:UNIQTH:00}}
Assume that $\gamma \in [0,2]$ and let $\delta$ be given by \eqref{EQ:07}. Let us first show that under the conditions of Theorem \ref{FPE:UNIQTH:00}, also \eqref{FPE:12} is satisfied. 
Indeed, using $\sigma(|v-u|) \leq C \langle v \rangle^2 \langle u \rangle^2$ this follows from
\begin{align*}
 ( |v| + |u| + |r| + |q|)\sigma(|v-u|)
 &\leq C(|v| + |u|) \langle v \rangle^2 \langle u \rangle^2 + C(|r| + |q|)\langle v \rangle^2 \langle u \rangle^2
 \\ &\leq C \langle v \rangle^3 \langle u \rangle^3 + C \langle r \rangle \langle q \rangle \langle v \rangle \langle u \rangle
 \\ &\leq C \langle v \rangle^3 \langle u \rangle^3 
 + C \left( \langle r \rangle^{1 + \delta} + \langle v \rangle^{2 + \frac{2}{\delta}}\right)\left( \langle q \rangle^{1 + \delta} + \langle u \rangle^{2 + \frac{2}{\delta}}\right),
\end{align*}
where we have used Jensen's inequality
\[
 \langle r \rangle \langle v \rangle^2 \leq \frac{1}{1 + \delta}\langle r \rangle^{1 + \delta} + \frac{\delta}{1 + \delta}\langle v \rangle^{2\frac{1 + \delta}{\delta}}.
\]
Thus we can apply the coupling inequality. 
In order to estimate $\Psi$ as defined in Proposition \ref{general coupling inequality}, we need the following lemma.
\begin{Lemma}\label{FPE:LEMMA03}
 There exists a constant $C = C(\gamma, \sigma) > 0$ such that the following holds:
 \begin{enumerate}
  \item[(a)] For all $v,u, \widetilde{v}, \widetilde{u} \in \R^d$
   \begin{align*}
     &\ \left( |v-u| + |\widetilde{v} - \widetilde{u}| \right) \left| \sigma(|v-u|) - \sigma(|\widetilde{v} - \widetilde{u}|) \right| 
    \\ & \qquad \qquad \leq C \left( |v - u|^{\gamma} + |\widetilde{v} - \widetilde{u}|^{\gamma}\right)\left( |v-\widetilde{v}| + |u - \widetilde{u}| \right).
   \end{align*}
   Note that for $\gamma = 0$ the left-hand side is zero and hence the assertion is trivially satisfied.
  \item[(b)] For all $v,u,\widetilde{v}, \widetilde{u}$
   \begin{align*}
   &\ \left(|v-u| + |\widetilde{v} - \widetilde{u}|\right) | \sigma(|v-u|)\beta(r-q) - \sigma(|\widetilde{v} - \widetilde{u}|)\beta(\widetilde{r} - \widetilde{q})| 
    \\ &\leq C \left( \langle v \rangle^{\gamma} + \langle u \rangle^{\gamma} + \langle \widetilde{v} \rangle^{\gamma} + \langle \widetilde{u} \rangle^{\gamma}\right) \left( |v - \widetilde{v}| + |u - \widetilde{u}| \right)
    \\ &\ \ \ + C \left( \langle v \rangle^{1 + \gamma} + \langle u \rangle^{1 + \gamma} + \langle \widetilde{v} \rangle^{1+ \gamma} + \langle \widetilde{u} \rangle^{1 + \gamma}\right)\left( |r - \widetilde{r}| + |q-\widetilde{q}|\right).
   \end{align*}
 \end{enumerate}
\end{Lemma}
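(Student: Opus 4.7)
The plan is to treat the two parts separately, with part (a) providing the main technical estimate and part (b) following by a triangle–inequality split.

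\smallskip

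\textbf{Part (a).} Set $a := |v-u|$ and $b := |\widetilde v-\widetilde u|$, and assume without loss of generality $a \geq b \geq 0$. If $\gamma = 0$, the assumption $|\sigma(|z|)-\sigma(|w|)| \leq c_{\sigma}||z|^{\gamma}-|w|^{\gamma}|$ forces $\sigma$ to be constant on $\mathbb{R}^d\setminus\{0\}$, so there is nothing to prove; assume $\gamma \in (0,2]$. By the standing assumption on $\sigma$, the problem reduces to bounding $(a+b)(a^{\gamma}-b^{\gamma})$ by $C(a^{\gamma}+b^{\gamma})(a-b)$; the factor $|a-b|$ is then controlled by $|v-\widetilde v|+|u-\widetilde u|$ via the reverse triangle inequality $\big||z|-|w|\big|\le|z-w|$. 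To prove this bound I split into two regimes:
\begin{itemize}
\item If $a \geq 2b$, then $a-b \geq a/2$, so $(a^{\gamma}+b^{\gamma})(a-b) \geq \tfrac12 a^{1+\gamma}$, whereas $(a+b)(a^{\gamma}-b^{\gamma}) \leq 2 a^{1+\gamma}$, giving the claim with $C=4$.
\item If $b \leq a < 2b$, then by the mean value theorem $a^{\gamma}-b^{\gamma} = \gamma \xi^{\gamma-1}(a-b)$ for some $\xi\in[b,a]$. For $\gamma\geq 1$ use $\xi^{\gamma-1} \leq a^{\gamma-1}$; for $\gamma\in(0,1)$ use the opposite monotonicity $\xi^{\gamma-1}\leq b^{\gamma-1}$. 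In either case $(a+b)\xi^{\gamma-1}\leq 3\max(a^{\gamma-1},b^{\gamma-1})\cdot \max(a,b) \leq 3(a^{\gamma}+b^{\gamma})$.
\end{itemize}
The case $\gamma\in(0,1)$ is the subtle one: naively applying $|a^{\gamma}-b^{\gamma}|\le|a-b|^{\gamma}$ fails because $|a-b|^{\gamma-1}$ blows up, which is exactly why the two–regime split is necessary.

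\smallskip

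\textbf{Part (b).} Abbreviate $\sigma=\sigma(|v-u|)$, $\widetilde\sigma=\sigma(|\widetilde v-\widetilde u|)$, $\beta=\beta(r-q)$, $\widetilde\beta=\beta(\widetilde r-\widetilde q)$, and write
\[
 \sigma\beta - \widetilde\sigma\widetilde\beta = (\sigma-\widetilde\sigma)\beta + \widetilde\sigma(\beta-\widetilde\beta).
\]
Since $0\leq \beta\leq 1$, the first piece contributes at most $|\sigma-\widetilde\sigma|$, so multiplying by $(|v-u|+|\widetilde v-\widetilde u|)$ and applying part (a) gives a bound by $C(|v-u|^{\gamma}+|\widetilde v-\widetilde u|^{\gamma})(|v-\widetilde v|+|u-\widetilde u|)$. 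Using the elementary inequalities $\langle v+w\rangle \leq \sqrt 2\langle v\rangle\langle w\rangle$ and $|v-u|^{\gamma} \leq C(\langle v\rangle^{\gamma}+\langle u\rangle^{\gamma})$ (valid for all $\gamma\in[0,2]$, either by subadditivity or by convexity) yields the first term on the right-hand side of the claimed estimate.

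\smallskip

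For the second piece, $|\beta(r-q)-\beta(\widetilde r-\widetilde q)| \leq \|\nabla\beta\|_{\infty}(|r-\widetilde r|+|q-\widetilde q|)$ since $\beta\in C_c^1(\mathbb{R}^d)$, and the standing growth assumption gives $\widetilde\sigma\leq c_{\sigma}\langle \widetilde v-\widetilde u\rangle^{\gamma}$. Thus I need to dominate
\[
 (|v-u|+|\widetilde v-\widetilde u|)\langle \widetilde v-\widetilde u\rangle^{\gamma}
 \leq C\big(\langle v\rangle+\langle u\rangle+\langle \widetilde v\rangle+\langle \widetilde u\rangle\big)\langle \widetilde v\rangle^{\gamma}\langle \widetilde u\rangle^{\gamma}
\]
by $C(\langle v\rangle^{1+\gamma}+\langle u\rangle^{1+\gamma}+\langle \widetilde v\rangle^{1+\gamma}+\langle \widetilde u\rangle^{1+\gamma})$. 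This follows from Young's inequality with exponents $p=1+\gamma$ and $q=(1+\gamma)/\gamma$ (and a direct estimate when $\gamma=0$), applied to each cross–product $\langle\cdot\rangle\langle\cdot\rangle^{\gamma}$, after using $\langle \widetilde v-\widetilde u\rangle^{\gamma}\leq 2^{\gamma/2}\langle\widetilde v\rangle^{\gamma}\langle\widetilde u\rangle^{\gamma}$. Multiplying by $(|r-\widetilde r|+|q-\widetilde q|)$ delivers the second term in the claim, and combining the two pieces completes the proof.
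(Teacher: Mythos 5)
Your part (a) is correct, though you take a slightly different route than the paper: the paper reaches the bound in two quick applications of the two-sided elementary inequality $c_{a,b}|x^{a+b}-y^{a+b}| \le (x^a+y^a)|x^b-y^b| \le C_{a,b}|x^{a+b}-y^{a+b}|$ (first with $(a,b)=(1,\gamma)$, then with $(a,b)=(\gamma,1)$), whereas you derive the same bound from scratch via a two-regime split and the mean value theorem. Both are fine; the paper's route is shorter because it reuses a known inequality.

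Part (b), however, has a genuine gap in the treatment of the second piece $\widetilde\sigma(\beta-\widetilde\beta)$. You pass from $\langle\widetilde v-\widetilde u\rangle^{\gamma}$ to the product $\langle\widetilde v\rangle^{\gamma}\langle\widetilde u\rangle^{\gamma}$ via submultiplicativity, and then assert that
\[
\bigl(\langle v\rangle+\langle u\rangle+\langle\widetilde v\rangle+\langle\widetilde u\rangle\bigr)\langle\widetilde v\rangle^{\gamma}\langle\widetilde u\rangle^{\gamma}
\le C\bigl(\langle v\rangle^{1+\gamma}+\langle u\rangle^{1+\gamma}+\langle\widetilde v\rangle^{1+\gamma}+\langle\widetilde u\rangle^{1+\gamma}\bigr)
\]
"by Young's inequality." But this inequality is false for $\gamma>0$: take $v=u=0$ and $|\widetilde v|=|\widetilde u|=R$; the left side grows like $R^{1+2\gamma}$ while the right side only grows like $R^{1+\gamma}$. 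The step you call Young's inequality produces the cross term $\langle v\rangle\cdot\bigl(\langle\widetilde v\rangle^{\gamma}\langle\widetilde u\rangle^{\gamma}\bigr)$, whose conjugate exponent gives back $\langle\widetilde v\rangle^{1+\gamma}\langle\widetilde u\rangle^{1+\gamma}$ — still a product, not a sum — so the argument does not close. The submultiplicative bound $\langle\widetilde v-\widetilde u\rangle\le\sqrt2\,\langle\widetilde v\rangle\langle\widetilde u\rangle$ is simply too lossy here. The fix is to use instead the other elementary inequality stated in the paper, $\langle\widetilde v-\widetilde u\rangle\le\sqrt2\bigl(\langle\widetilde v\rangle+\langle\widetilde u\rangle\bigr)$, giving $\widetilde\sigma\le C\bigl(\langle\widetilde v\rangle^{\gamma}+\langle\widetilde u\rangle^{\gamma}\bigr)$; expanding $(\langle v\rangle+\langle u\rangle+\langle\widetilde v\rangle+\langle\widetilde u\rangle)(\langle\widetilde v\rangle^{\gamma}+\langle\widetilde u\rangle^{\gamma})$ then yields only genuine two-factor products $\langle\cdot\rangle\langle\cdot\rangle^{\gamma}$, which do succumb to Young's inequality with exponents $1+\gamma$ and $(1+\gamma)/\gamma$. (For reference, the paper avoids this altogether by using the opposite decomposition $\sigma\beta-\widetilde\sigma\widetilde\beta=\widetilde\beta(\sigma-\widetilde\sigma)+\sigma(\beta-\widetilde\beta)$, so that the $\sigma$ multiplying $\beta-\widetilde\beta$ has the same arguments $v,u$ as the outer $|v-u|$ factor, making the bound $|v-u|\sigma(|v-u|)\le C(\langle v\rangle^{1+\gamma}+\langle u\rangle^{1+\gamma})$ immediate.)
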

\begin{proof}
  \textit{(a)} We use, for $x,y \geq 0$ and $a,b > 0$, the elementary inequality
 \begin{align*}
  c_{a,b}|x^{a+b} - y^{a+b}| \leq (x^a + y^a)|x^b -y^b| \leq C_{a,b} |x^{a+b} - y^{a+b}|
 \end{align*}
 with some constants $c_{a,b},C_{a,b} > 0$ (see e.g. \cite{F15} for a similar application) to obtain
 \begin{align*}
  &\ \left( |v-u| + |\widetilde{v} - \widetilde{u}| \right) \left| \sigma(|v-u|) - \sigma(|\widetilde{v} - \widetilde{u}|) \right|
  \\ &\leq C\left( |v-u| + |\widetilde{v} - \widetilde{u}| \right) \left| |v-u|^{\gamma} - |\widetilde{v} - \widetilde{u}|^{\gamma} \right|
  \\ &\leq C\left| |v-u|^{1+\gamma} - |\widetilde{v} - \widetilde{u}|^{1+\gamma}\right|
      \leq C \left( |v-u|^{\gamma} + |\widetilde{v} - \widetilde{u}|^{\gamma}\right)\left( | v - \widetilde{v}| + |u - \widetilde{u}| \right).
 \end{align*}
 \\ \textit{(b)} We estimate by (a)
 \begin{align*}
   &\ |v-u| | \sigma(|v-u|)\beta(r-q) - \sigma(|\widetilde{v} - \widetilde{u}|)\beta(\widetilde{r} - \widetilde{q})| 
   \\ &\leq |v-u| \left( \beta(\widetilde{r} - \widetilde{q})|\sigma(|v-u|) - \sigma(|\widetilde{v} - \widetilde{u}|)| + \sigma(|v-u|)| \beta(r-q) - \beta(\widetilde{r} - \widetilde{q})| \right)
   \\ &\leq C \left( \langle v \rangle^{\gamma} + \langle u \rangle^{\gamma} + \langle \widetilde{v} \rangle^{\gamma} + \langle \widetilde{u} \rangle^{\gamma}\right) \left( |v-\widetilde{v}| + |u-\widetilde{u}|\right)
   + C \left( \langle v \rangle^{1+\gamma} + \langle u \rangle^{1+\gamma} \right)\left( |r-\widetilde{r}| + |q - \widetilde{q}| \right).
 \end{align*}
 The second term can be estimated in the same way. 
\end{proof}
From this lemma we deduce the following estimate on the function $\Psi$.
\begin{Lemma}\label{FPE:LEMMA05}
 There exists a constant $C = C(\delta,\gamma,\sigma) > 0$ such that
 \begin{align*}
   \Psi &\leq\ \  C\left( e^{\delta \langle u \rangle^{1+\gamma}} + e^{\delta \langle \widetilde{u} \rangle^{1+\gamma}} \right)\left(|v-\widetilde{v}| + |r-\widetilde{r}|\right)
  \\ &\ \ \ + C\left( e^{\delta \langle v \rangle^{1+\gamma}} + e^{\delta \langle \widetilde{v} \rangle^{1+\gamma} } \right)\left( |u - \widetilde{u}| + |q - \widetilde{q}|\right)
 \\ &\ \ \ + C\left( \langle v \rangle^{1+\gamma} + \langle \widetilde{v} \rangle^{1+\gamma} \right)\left( |v-\widetilde{v}| + |r - \widetilde{r}| \right)
 \\ &\ \ \ + C\left( \langle u \rangle^{1+\gamma} + \langle \widetilde{u} \rangle^{1+\gamma} \right)\left( |u - \widetilde{u}| + |q - \widetilde{q}| \right).
 \end{align*}
\end{Lemma}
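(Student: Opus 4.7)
The plan is to split $\Psi = \Psi_1 + \Psi_2$ where
\[
 \Psi_1 := (|v-u|+|\widetilde v-\widetilde u|)\,|\sigma\beta-\widetilde\sigma\widetilde\beta|, \qquad \Psi_2 := (|v-\widetilde v|+|u-\widetilde u|)\min\{\sigma\beta,\widetilde\sigma\widetilde\beta\},
\]
and estimate each summand separately, followed by a single Young-type exchange converting certain polynomial weights into exponential ones. For $\Psi_1$ I apply Lemma \ref{FPE:LEMMA03}(b) directly, producing four contributions of shape $\langle\cdot\rangle^{\gamma}(|v-\widetilde v|+|u-\widetilde u|)$ (with the weight running over $v,u,\widetilde v,\widetilde u$) plus four contributions of shape $\langle\cdot\rangle^{1+\gamma}(|r-\widetilde r|+|q-\widetilde q|)$. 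For $\Psi_2$ I use $\beta \leq 1$ together with
\[
 \sigma(|v-u|) \leq c_\sigma\langle v-u\rangle^\gamma \leq C(\langle v\rangle^\gamma + \langle u\rangle^\gamma),
\]
valid for $\gamma \in [0,2]$ because $\langle v-u\rangle \leq \sqrt{2}\langle v\rangle\langle u\rangle$. Using either this bound or its tilded counterpart on the minimum gives four further contributions of shape $\langle\cdot\rangle^\gamma(|v-\widetilde v|+|u-\widetilde u|)$.

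The reshaping step is the heart of the argument. For any $\delta > 0$ one has the elementary bound
\[
 \langle w\rangle^\gamma \leq C_\delta\, e^{\delta\langle w\rangle^{1+\gamma}}, \qquad w \in \R^d,
\]
since the exponential dominates every power, and trivially $\langle w\rangle^\gamma \leq \langle w\rangle^{1+\gamma}$ because $\langle w\rangle \geq 1$. I then regroup each of the contributions collected above according to whether its polynomial weight is attached to a Lipschitz difference in the \emph{same} particle (i.e.\ $\langle v\rangle$ or $\langle\widetilde v\rangle$ paired with $|v-\widetilde v|$ or $|r-\widetilde r|$; and $\langle u\rangle$ or $\langle\widetilde u\rangle$ paired with $|u-\widetilde u|$ or $|q-\widetilde q|$) or in the \emph{other} one. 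Same-particle pairings are bumped up to $\langle\cdot\rangle^{1+\gamma}$ and absorbed into the polynomial summands $\langle v\rangle^{1+\gamma}(|v-\widetilde v|+|r-\widetilde r|)$ and $\langle u\rangle^{1+\gamma}(|u-\widetilde u|+|q-\widetilde q|)$ (together with their tilded versions), while cross-particle pairings are converted via the Young-type bound above into the exponential summands $e^{\delta\langle u\rangle^{1+\gamma}}(|v-\widetilde v|+|r-\widetilde r|)$ and $e^{\delta\langle v\rangle^{1+\gamma}}(|u-\widetilde u|+|q-\widetilde q|)$ (again, together with the tildes). Collecting the eight resulting contributions and absorbing universal constants yields the claimed bound.

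The argument is essentially careful bookkeeping and carries no serious obstacle; the one conceptual point is the asymmetric exchange $\langle w\rangle^\gamma \rightsquigarrow e^{\delta\langle w\rangle^{1+\gamma}}$ applied selectively to cross-particle weights. This asymmetry is forced by the structure of the coupling inequality in Proposition \ref{general coupling inequality}: when the bound on $\Psi$ is integrated against $dH_s^0\,dH_s^1$, every cross-particle weight ends up integrated against the second copy of $\mu_s+\nu_s$, where the only uniform control available is the exponential moment assumed in \eqref{EQ:07}; same-particle weights, by contrast, integrate against polynomial moments that can be handled independently.
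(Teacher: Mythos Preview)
Your proof is correct and follows essentially the same route as the paper: bound $\Psi_1$ via Lemma \ref{FPE:LEMMA03}(b), bound $\Psi_2$ using $\sigma(|v-u|) \leq C(\langle v\rangle^\gamma + \langle u\rangle^\gamma)$, then regroup into same-particle and cross-particle pairings and convert the cross-particle weights to exponentials. One minor slip: the bound $\langle v-u\rangle^\gamma \leq C(\langle v\rangle^\gamma + \langle u\rangle^\gamma)$ follows from the additive inequality $\langle v-u\rangle \leq \sqrt{2}(\langle v\rangle + \langle u\rangle)$, not the multiplicative one you cite.
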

\begin{proof}
Using the inequality
\begin{align*}
 &\  \sigma(|v-u|) \wedge \sigma(|\widetilde{v} - \widetilde{u}|) \left(| v- \widetilde{v}| + |u - \widetilde{u}|\right)
 \\ &\leq C \left(\langle u \rangle^{\gamma} + \langle v \rangle^{\gamma} + \langle \widetilde{u} \rangle^{\gamma} + \langle \widetilde{v} \rangle^{\gamma} \right) \left(|v-\widetilde{v}| + |u-\widetilde{u}|\right)
\end{align*}
we obtain from Lemma \ref{FPE:LEMMA03}
\begin{align*}
 \Psi &\leq C\left( \langle u \rangle^{\gamma} + \langle v \rangle^{\gamma} + \langle \widetilde{u} \rangle^{\gamma} + \langle \widetilde{v} \rangle^{\gamma} \right)\left( |v-\widetilde{v}| + |u - \widetilde{u}| \right)
 \\ &\ \ \ + C\left( \langle u \rangle^{1+\gamma} + \langle v \rangle^{1+\gamma} + \langle \widetilde{u} \rangle^{1+\gamma} + \langle \widetilde{v} \rangle^{1+\gamma} \right)\left( |r - \widetilde{r}| + |q-\widetilde{q}| \right)
 \\ &\leq  C\left( \langle u \rangle^{1+\gamma} + \langle \widetilde{u} \rangle^{1+\gamma} \right)\left(|v-\widetilde{v}| + |r-\widetilde{r}|\right)
 \\ &\ \ \ + C\left( \langle v \rangle^{1+\gamma} + \langle \widetilde{v} \rangle^{1+\gamma} \right)\left( |u - \widetilde{u}| + |q - \widetilde{q}|\right)
 \\ &\ \ \ + C\left( \langle v \rangle^{1+\gamma} + \langle \widetilde{v} \rangle^{1+\gamma} \right)\left( |v-\widetilde{v}| + |r - \widetilde{r}| \right)
 \\ &\ \ \ + C\left( \langle u \rangle^{1+\gamma} + \langle \widetilde{u} \rangle^{1+\gamma} \right)\left( |u - \widetilde{u}| + |q - \widetilde{q}| \right),
\end{align*}
where $C = C(\gamma,\sigma) > 0$ is some constant. 
Estimating the polynomials by the exponential function yields the assertion.
\end{proof}
The coupling inequality combined with Lemma \ref{FPE:LEMMA05} gives for some constant $K > 0$
\begin{align*}
  W_1^t( \mu_t,\nu_t) &\leq W_1(\mu_0, \nu_0) + K \mathcal{C}_{\gamma}(T,\mu+\nu,\delta) \int_{0}^{t} W_1^s(\mu_s,\nu_s)ds
  \\ &\ \ \ + K \mathcal{C}_{\gamma}(T,\mu+\nu,\delta)\int_{0}^{t}\int_{\R^{2d}}\left( \langle v \rangle^{1+\gamma} + \langle \widetilde{v} \rangle^{1+\gamma}\right)\left( |v- \widetilde{v}| + |r- \widetilde{r}|\right)dH_s ds,
\end{align*}
where we have used $|v - \widetilde{v}| + |r - \widetilde{r}| \leq (1+T)|(r,v) - (\widetilde{r}, \widetilde{v})|_s$,
\eqref{FPEUNIQ:EQ02} and $H_s = H_s(dr,dv;d\widetilde{r},d\widetilde{v}) \in \mathcal{H}(\mu_s,\nu_s)$. 
The proof of Theorem \ref{FPE:UNIQTH:00} is complete once we have shown the following Lemma.
\begin{Lemma}
 There exists a constant $K > 0$ such that for each $s \in [0,t]$ we have
 \begin{align}\label{ESTIMATE1}
  &\ \int_{\R^{4d}}\left( \langle v \rangle^{1+\gamma} + \langle \widetilde{v}\rangle^{1+\gamma}\right)|r - \widetilde{r}| dH_s(r,v;\widetilde{r},\widetilde{v})
  \\ \notag &\leq K\mathcal{C}_{\gamma}(T,\mu+\nu,\delta) \left( 1 + \left| \ln(W_1^s(\mu_s,\nu_s))\right| \right) W_1^s(\mu_s,\nu_s)
 \end{align}
 and
 \begin{align}\label{ESTIMATE}
    &\ \int_{\R^{4d}}\left( \langle v \rangle^{1+\gamma} + \langle \widetilde{v}\rangle^{1+\gamma}\right)|v - \widetilde{v}| dH_s(r,v;\widetilde{r},\widetilde{v})
   \\ \notag &\leq K\mathcal{C}_{\gamma}(T,\mu+\nu,\delta)\left( 1 + \left| \ln(W_1^s(\mu_s,\nu_s))\right| \right) W_1^s(\mu_s,\nu_s).
 \end{align} 
\end{Lemma}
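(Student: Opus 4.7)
The plan is to apply the standard threshold technique. Write $W := W_1^s(\mu_s,\nu_s)$ and choose a level $A > 1$ that will eventually be tuned to $W$. Since \eqref{ESTIMATE1} and \eqref{ESTIMATE} have the same structure I treat them together, splitting the integration region according to whether the weight $w := \langle v \rangle^{1+\gamma} + \langle \widetilde v \rangle^{1+\gamma}$ is below or above $A$.

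On the good set $G_A := \{w \leq A\}$ the weight is trivially bounded by $A$. Combined with the elementary estimates $|r - \widetilde r| \leq (1+T)|(r,v) - (\widetilde r, \widetilde v)|_s$ and $|v - \widetilde v| \leq |(r,v) - (\widetilde r, \widetilde v)|_s$, and with the representation \eqref{FPEUNIQ:EQ02} of the optimal coupling, both integrals restricted to $G_A$ are at most $C A (1+T) W$.

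On the bad set $G_A^c$ I bound $|r - \widetilde r| \leq |r| + |\widetilde r|$ (respectively $|v - \widetilde v| \leq |v| + |\widetilde v|$) and further split $\mathbf{1}_{G_A^c} \leq \mathbf{1}_{\{\langle v \rangle^{1+\gamma} > A/2\}} + \mathbf{1}_{\{\langle \widetilde v \rangle^{1+\gamma} > A/2\}}$. Each resulting indicator depends only on one marginal variable, so after integrating out the other side of $H_s$ we are left with an integral against $\mu_s$ or $\nu_s$. Applying the elementary inequality
\[
 x \, \mathbf{1}_{\{x > A/2\}} \leq C_\delta \, e^{-\delta A/4} \, e^{\delta x}, \qquad x \geq 0,
\]
(which follows from the boundedness of $x \mapsto x e^{-\delta x/2}$) extracts a decay factor $e^{-\delta A/4}$. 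The remaining polynomial factor $|r|$ or $|v|$ is absorbed into $\mathcal{C}_\gamma(T,\mu+\nu,\delta)$ by Young's inequality $|r| \leq 1 + |r|^{1+\delta}$. For the genuinely mixed contributions such as $e^{\delta \langle v \rangle^{1+\gamma}} |\widetilde r|$, one first marginalises one side of $H_s$ and then applies a Cauchy--Schwarz or H\"older split, lowering $\delta$ internally to a fixed fraction of itself so that the doubled exponent on the exponential factor stays $\leq \delta$; this only affects the constant $K$.

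Combining both regions yields a bound of the form $C A (1+T) W + C \, \mathcal{C}_\gamma(T,\mu+\nu,\delta) \, e^{-cA}$ for some $c = c(\delta) > 0$. Choosing $A := 1 + \frac{1}{c}|\log W|$ (and treating the harmless regime $W \geq 1$ separately, by absorbing the estimate into the constant) balances the two contributions at the common order $\mathcal{C}_\gamma(T,\mu+\nu,\delta) \, W \, (1 + |\log W|)$, which is exactly the bound claimed by the lemma. The main obstacle is the bad-set estimate: some contributions truly couple the two halves of $H_s$, and one has to keep the implicit constants finite using only the exponential moment provided by $\mathcal{C}_\gamma$, which is what forces the internal rescaling of $\delta$ before the Cauchy--Schwarz step.
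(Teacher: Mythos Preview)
Your proposal is correct and follows essentially the same threshold scheme as the paper: split at a level comparable to $|\log W|$, bound the good set by $A\,W$ via the optimal-coupling representation, and on the bad set use an exponential Chebyshev to extract the factor $e^{-cA}$, with an internal rescaling of $\delta$ so that the final moments match $\mathcal{C}_\gamma$.

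The only technical variation is in the mixed terms such as $e^{\delta'\langle v\rangle^{1+\gamma}}|\widetilde r|$. Your phrasing ``first marginalises one side of $H_s$'' is slightly misleading, since $H_s$ need not be a product measure; what you actually use is H\"older against $H_s$ with exponents $\frac{1+\delta}{\delta}$ and $1+\delta$, after which each factor depends on a single marginal. The paper instead applies the dual device: a \emph{pointwise} Young inequality with the same exponents turns the product into a sum of single-variable terms, so that integration against $H_s$ reduces directly to the marginals. Both routes force exactly the same choice $\delta' = \frac{\delta^2}{1+\delta}$ for the rescaled exponential exponent, and yield the same bound; the paper's pointwise version is marginally cleaner because it avoids any discussion of the coupling structure of $H_s$ altogether.
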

A similar estimate to \eqref{ESTIMATE} was shown in \cite[p.820]{FM09},
while \eqref{ESTIMATE1} did not appear in the space-homogeneous setting studied there.
\begin{proof}
 Here and below we denote by $K > 0$ some generic constant (independent of $(\mu_t)_{t \in [0,T]}$ and $(\nu_t)_{t \in [0,T]}$) which may vary from line to line. For $b > 0$, we obtain
 \begin{align*}
  &\ \int_{\R^{4d}}\left( \langle v \rangle^{1+\gamma} + \langle \widetilde{v}\rangle^{1+\gamma}\right)|r - \widetilde{r}| dH_s(r,v;\widetilde{r},\widetilde{v})
 \\ &= \int_{\R^{4d}}\1_{ \{ \langle v \rangle \leq b, \ \langle \widetilde{v} \rangle \leq b \} }\left( \langle v \rangle^{1+\gamma} + \langle \widetilde{v}\rangle^{1+\gamma}\right)|r - \widetilde{r}| dH_s(r,v;\widetilde{r},\widetilde{v})
 \\ &\ \ \  + \int_{\R^{4d}}\left(1 - \1_{ \{ \langle v \rangle \leq b, \ \langle \widetilde{v} \rangle \leq b \} } \right) \left( \langle v \rangle^{1+\gamma} + \langle \widetilde{v}\rangle^{1+\gamma}\right)|r - \widetilde{r}| dH_s(r,v;\widetilde{r},\widetilde{v})
  \\ &\leq 2 b^{1+\gamma}(1+T) W_1^s(\mu_s, \nu_s) + 2I(b)
 \end{align*}
 where we have used $|r - \widetilde{r}| \leq (1+T)|(r,v) - (\widetilde{r},\widetilde{v})|_s$ and
 \begin{align*}
  I(b) := \int_{\R^{4d}}\left( \langle v \rangle^{1+\gamma} + \langle \widetilde{v} \rangle^{1+\gamma}\right)(|r| + |\widetilde{r}|) 
  \bigg( \1_{ \{ \langle v \rangle > b \} } + \1_{ \{ \langle \widetilde{v} \rangle > b \} } \bigg)dH_s^1.
 \end{align*}
 For $K$ large enough satisfying for all $v, \widetilde{v} \in \R^d$
 \begin{align*}
  \left( \langle v \rangle^{1+\gamma} + \langle \widetilde{v}\rangle^{1+\gamma}\right)\left( e^{\frac{\delta}{1+\delta}\frac{\delta}{2}\langle v \rangle^{1+\gamma}} + e^{\frac{\delta}{1+\delta}\frac{\delta}{2}\langle \widetilde{v}\rangle^{1+\gamma}}\right)
  \leq K\left( e^{\frac{\delta}{1+\delta}\delta\langle v \rangle^{1+\gamma}} + e^{\frac{\delta}{1+\delta}\delta\langle \widetilde{v}\rangle^{1+\gamma}}\right)
 \end{align*}
 we obtain 
 \[
  \left( \langle v \rangle^{1+\gamma} + \langle \widetilde{v} \rangle^{1+\gamma}\right)\1_{ \{ \langle v \rangle > b \} }
 \leq K e^{-\frac{\delta}{1+\delta} \frac{\delta}{2}b^{1+\gamma}} \left( e^{\frac{\delta}{1+\delta}\delta\langle v \rangle^{1+\gamma}} + e^{\frac{\delta}{1+\delta}\delta\langle \widetilde{v}\rangle^{1+\gamma}}\right).
 \]
 Estimating the second term in $I(b)$ in the same way gives
 \begin{align*}
  I(b) &\leq K e^{-\frac{\delta}{1+\delta}\frac{\delta}{2}b^{1+\gamma}} \int_{\R^{4d}}\left( e^{\frac{\delta}{1+\delta}\delta\langle v \rangle^{1+\gamma}} + e^{\frac{\delta}{1+\delta}\delta\langle \widetilde{v}\rangle^{1+\gamma}}\right)(|r| + |\widetilde{r}|)dH_s(r,v;\widetilde{r},\widetilde{v})
  \\ &\leq K e^{-\frac{\delta}{1+\delta}\frac{\delta}{2}b^{1+\gamma}} \mathcal{C}_{\gamma}(T, \mu+\nu,\delta),
 \end{align*}
 where we have used $H_s \in \mathcal{H}(\mu_s,\nu_s)$ and
 \begin{align*}
  &\ \left( e^{\frac{\delta}{1+\delta}\delta\langle v \rangle^{1+\gamma}} + e^{\frac{\delta}{1+\delta}\delta\langle \widetilde{v}\rangle^{1+\gamma}}\right)(|r| + |\widetilde{r}|)
  \\ &\leq \frac{\delta}{1+\delta} \left( e^{\frac{\delta}{1+\delta}\delta\langle v \rangle^{1+\gamma}} + e^{\frac{\delta}{1+\delta}\delta\langle \widetilde{v}\rangle^{1+\gamma}}\right)^{\frac{1+\delta}{\delta}} + \frac{1}{1+\delta}\left( |r| + |\widetilde{r}|\right)^{1+\delta}
  \\ &\leq K \left( e^{\frac{\delta}{2} \langle v \rangle^{1+\gamma}} + e^{\frac{\delta}{2}\langle \widetilde{v}\rangle^{1+\gamma}}\right)
  + K \left( |r|^{1 + \delta} + |\widetilde{r}|^{1+\delta} \right).
 \end{align*}
 This gives the estimate
 \begin{align*}
  &\ \int_{\R^{4d}}\left( \langle v \rangle^{1+\gamma} + \langle \widetilde{v}\rangle^{1+\gamma}\right)|r - \widetilde{r}| dH_s(r,v;\widetilde{r},\widetilde{v})
  \\ &\leq 2 b^{1+\gamma}(1+T) W_1^s(\mu_s, \nu_s) + K e^{-\frac{\delta}{1+\delta}\frac{\delta}{2}b^{1+\gamma}} \mathcal{C}_{\gamma}(T, \mu+\nu,\delta).
 \end{align*}
 Letting $b^{1+\gamma} = \frac{1+\delta}{\delta}\frac{2}{\delta}\left| \ln\left( W_1^s(\mu_s,\nu_s)\right)\right|$ yields \eqref{ESTIMATE1}.
 The inequality \eqref{ESTIMATE} can be shown in the same way.
\end{proof}

\section{Proof of Theorem \ref{FPE:UNIQTH:01}}
In this section we assume that $\gamma \in (-d,0]$.
As before, we first check that \eqref{FPE:12} is satisfied.
Indeed we obtain
\begin{align*}
 &\ \int_{\R^{2d}}\int_{\R^{2d}}( |v| + |u| + |r| + |q|)\sigma(|v-u|) \mu_t(dq,du) \mu_t(dr,dv)
 \\ &\leq K \int_{\R^{2d}}\int_{\R^{2d}}( |v| + |u| + |r| + |q| )|v-u|^{\gamma} \mu_t(dq,du) \mu_t(dr,dv)
 \\ &\leq K \Lambda(\mu_t) \int_{\R^{2d}}(|v| + |r|)\mu_t(dr,dv)
\end{align*}
for some generic constant $K > 0$. Since an analogous 
inequality also holds for $\nu_t$, \eqref{FPE:12} is satisfied
and we may apply the coupling inequality. 
Again, to estimate the expression $\Psi$ appearing in Proposition \ref{general coupling inequality} we use the following lemma.
\begin{Lemma}\label{FPE:LEMMA04}
 There exists a constant $C = C(\gamma,\sigma) > 0$ such that the following holds:
 \begin{enumerate}
  \item[(a)] For all $v,u,\widetilde{v},\widetilde{u} \in \R^d$
   \begin{align}\label{UNIQ:07}
     &\ \left( |v-u| + |\widetilde{v} - \widetilde{u}| \right) \left| \sigma(|v-u|) - \sigma(|\widetilde{v} - \widetilde{u}|) \right| 
    \\ \notag & \qquad \qquad \qquad \leq C \left( |v - u|^{\gamma} + |\widetilde{v} - \widetilde{u}|^{\gamma}\right)\left( |v- \widetilde{v}| + |u - \widetilde{u}| \right).
   \end{align}
  \item[(b)] If $\gamma \in (-1,0]$, then for all $v,u,\widetilde{v},\widetilde{u} \in \R^d$
  \begin{align*}
    &\ \left( |v-u| + |\widetilde{v} - \widetilde{u}|\right) | \sigma(|v-u|)\beta(r-q) - \sigma(|\widetilde{v} - \widetilde{u}|)\beta(\widetilde{r} - \widetilde{q})| 
    \\ &\leq C \left( \langle v \rangle^{1+\gamma} + \langle u \rangle^{1+\gamma} + \langle \widetilde{v} \rangle^{1+\gamma} + \langle \widetilde{u} \rangle^{1+\gamma}\right)\left( |r - \widetilde{r}| + |q - \widetilde{q}| \right)
    \\ &\ \ \  + C \left( |v - u|^{\gamma} + |\widetilde{v} - \widetilde{u}|^{\gamma}\right)\left( |v-\widetilde{v}| + |u - \widetilde{u}| \right).
  \end{align*}
  \item[(c)] If $\gamma \in (-d,-1]$, then for all $v,u,\widetilde{v}, \widetilde{u} \in \R^d$
  \begin{align*}
    &\ \left( |v-u| + |\widetilde{v} - \widetilde{u}|\right) | \sigma(|v-u|)\beta(r-q) - \sigma(|\widetilde{v} - \widetilde{u}|)\beta(\widetilde{r} - \widetilde{q})| 
    \\ &\leq C \left( |v-u|^{1+\gamma} + |\widetilde{v} - \widetilde{u}|^{1+ \gamma}\right)\left( |r - \widetilde{r}| + |q - \widetilde{q}| \right)
    \\ &\ \ \ + C \left( |v - u|^{\gamma} + |\widetilde{v} - \widetilde{u}|^{\gamma}\right)\left( |v-\widetilde{v}| + |u - \widetilde{u}| \right).
  \end{align*}
 \end{enumerate}
\end{Lemma}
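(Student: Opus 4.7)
The overall strategy is to follow the template of Lemma \ref{FPE:LEMMA03}, replacing the positive-exponent elementary inequalities by their negative-exponent analogues. For part (a) I would observe that, for $\gamma \in (-d, 0)$ and $x, y > 0$, one has the algebraic identity
\[
 x^\gamma - y^\gamma = \frac{y^{|\gamma|} - x^{|\gamma|}}{(xy)^{|\gamma|}},
\]
so that the claimed inequality reduces to
\[
 (x+y)\bigl| x^{|\gamma|} - y^{|\gamma|}\bigr| \leq C \bigl(x^{|\gamma|} + y^{|\gamma|}\bigr)|x-y|.
\]
The latter is a direct consequence of the two-sided elementary inequality $c_{a,b}|x^{a+b}-y^{a+b}| \leq (x^a+y^a)|x^b-y^b| \leq C_{a,b}|x^{a+b}-y^{a+b}|$ already used in Lemma \ref{FPE:LEMMA03}(a), applied with $(a,b) = (1,|\gamma|)$ for the upper bound and with $(a,b) = (|\gamma|, 1)$ for the lower bound, and then dividing through by $(xy)^{|\gamma|}$. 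Setting $(x,y) = (|v-u|, |\widetilde v-\widetilde u|)$, invoking the standing assumption on $\sigma$, and applying the reverse triangle inequality $\bigl||v-u|-|\widetilde v-\widetilde u|\bigr| \leq |v-\widetilde v|+|u-\widetilde u|$ then yields \eqref{UNIQ:07}.

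For (b) and (c) I would use the decomposition
\[
 |\sigma\beta - \widetilde\sigma\widetilde\beta| \leq \widetilde\beta\,|\sigma-\widetilde\sigma| + \sigma\,|\beta-\widetilde\beta|,
\]
together with its symmetric counterpart. The first piece, multiplied by $(|v-u|+|\widetilde v-\widetilde u|)$ and combined with $\widetilde\beta \leq 1$, is handled by (a) and produces the velocity-difference term $C(|v-u|^\gamma+|\widetilde v-\widetilde u|^\gamma)(|v-\widetilde v|+|u-\widetilde u|)$ common to both (b) and (c). For the second piece I would use the triangle inequality $|\widetilde v-\widetilde u| \leq |v-u|+|v-\widetilde v|+|u-\widetilde u|$ to write
\[
 (|v-u|+|\widetilde v-\widetilde u|)\sigma(|v-u|) \leq 2|v-u|\sigma(|v-u|) + (|v-\widetilde v|+|u-\widetilde u|)\sigma(|v-u|),
\]
and estimate the diagonal contribution by $C|v-u|^{1+\gamma}$. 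In case (b), where $1+\gamma \in (0,1]$, subadditivity of $t \mapsto t^{1+\gamma}$ further gives $|v-u|^{1+\gamma} \leq \langle v\rangle^{1+\gamma}+\langle u\rangle^{1+\gamma}$; in case (c), where $1+\gamma \leq 0$, subadditivity fails and the factor $|v-u|^{1+\gamma}$ has to be retained, which matches the form stated in the lemma.

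The delicate residual term is $(|v-\widetilde v|+|u-\widetilde u|)\sigma(|v-u|) \leq C(|v-\widetilde v|+|u-\widetilde u|)|v-u|^\gamma$: rather than using the Lipschitz bound of $\beta$, which would produce a mixed $(|v-\widetilde v|+|u-\widetilde u|)(|r-\widetilde r|+|q-\widetilde q|)$ contribution absent from the target, I would pair it with the global bound $|\beta-\widetilde\beta| \leq 2$, so that it is absorbed into the velocity-difference part of the estimate. The diagonal contribution, on the other hand, is paired with the Lipschitz bound of $\beta$ to generate the $(|r-\widetilde r|+|q-\widetilde q|)$ factor. Symmetric versions of all the above arguments, interchanging the roles of $(v,u,r,q)$ and $(\widetilde v,\widetilde u,\widetilde r,\widetilde q)$, then complete (b) and (c). The main obstacle, as I see it, is case (c): since $1+\gamma \leq 0$ forbids subadditivity, one must carry the singular factor $|v-u|^{1+\gamma}$ through every step and verify it matches the stated form, while ensuring that no bound inadvertently generates a forbidden mixed position-velocity cross term.
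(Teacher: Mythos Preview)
Your proposal is correct. For part (a) you take a genuinely different route from the paper: you reduce to the positive-exponent elementary inequality via the identity $x^\gamma - y^\gamma = (y^{|\gamma|}-x^{|\gamma|})/(xy)^{|\gamma|}$ and then recycle the two-sided bound from Lemma~\ref{FPE:LEMMA03}(a), whereas the paper (following \cite{FM09}) decomposes $x \vee y = x \wedge y + |x-y|$ and applies the mean-value estimate $|x^\gamma - y^\gamma| \leq |\gamma|(x\wedge y)^{\gamma-1}|x-y|$ directly. Your argument is arguably cleaner since it avoids the MVT and reuses an inequality already in hand.

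For parts (b) and (c) the two arguments are essentially the same decomposition-and-Lipschitz strategy, but your triangle-inequality step $|\widetilde v-\widetilde u| \leq |v-u|+|v-\widetilde v|+|u-\widetilde u|$ and the subsequent careful pairing of the residual with $|\beta-\widetilde\beta|\leq 2$ is correct but unnecessary. The paper simply estimates $|v-u|\cdot|\sigma\beta-\widetilde\sigma\widetilde\beta|$ and $|\widetilde v-\widetilde u|\cdot|\sigma\beta-\widetilde\sigma\widetilde\beta|$ separately, pairing the first with the decomposition $\sigma|\beta-\widetilde\beta|+\widetilde\beta|\sigma-\widetilde\sigma|$ and the second with its mirror image $\widetilde\sigma|\beta-\widetilde\beta|+\beta|\sigma-\widetilde\sigma|$; this way the diagonal products $|v-u|\,\sigma(|v-u|)$ and $|\widetilde v-\widetilde u|\,\sigma(|\widetilde v-\widetilde u|)$ appear directly, and no cross term ever arises that would need to be absorbed. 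The payoff is that the ``delicate residual term'' you worry about simply does not exist in the paper's organisation.
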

\begin{proof}
 \textit{(a)} Following \cite[p. 821]{FM09} we obtain
 \begin{align*}
  &\ |v-u| \left| \sigma(|v-u|) - \sigma(|\widetilde{v} - \widetilde{u}|) \right|
  \\ &\leq c_{\sigma} \left( |v-u| \wedge |\widetilde{v} - \widetilde{u}| + \left| |v-u| - |\widetilde{v} - \widetilde{u}| \right| \right) \left| |v-u|^{\gamma} - |\widetilde{v} - \widetilde{u}|^{\gamma} \right|
  \\ &\leq  C|\gamma| \left( |v-u| \wedge |\widetilde{v} - \widetilde{u}|\right)^{\gamma - 1}  \left| |v-u|^{\gamma} - |\widetilde{v} - \widetilde{u}|^{\gamma} \right| |v-u| \wedge |\widetilde{v} - \widetilde{u}|
  \\ &\ \ \ + C \left( |v-u|^{\gamma} \vee |\widetilde{v} - \widetilde{u}|^{\gamma} \right) \left| |v-u|^{\gamma} - |\widetilde{v} - \widetilde{u}|^{\gamma} \right|
  \\ &\leq C (1 + |\gamma|) \left( |v-u|^{\gamma} + |\widetilde{v} - \widetilde{u}|^{\gamma}\right)\left( | v - \widetilde{v}| + |u - \widetilde{u}| \right).
 \end{align*}
 The other term can be estimated in the same way.
 \\ \textit{(b)} By (a) it follows that
 \begin{align*}
   &\ |v-u| | \sigma(|v-u|)\beta(r-q) - \sigma(|\widetilde{v} - \widetilde{u}|)\beta(\widetilde{r} - \widetilde{q})| 
   \\ &\leq |v-u|\left( \sigma(|v-u|)|\beta(r-q) - \beta(\widetilde{r} - \widetilde{q})| + \beta(\widetilde{r} - \widetilde{q})|\sigma(|v-u|) - \sigma(|\widetilde{v} - \widetilde{u}|)| \right)
   \\ &\leq C \left( \langle v \rangle^{1+\gamma} + \langle u \rangle^{1+\gamma}\right)\left( | r- \widetilde{r}| + |q - \widetilde{q}|\right) 
   + C \left( |v - u|^{\gamma} + |\widetilde{v} - \widetilde{u}|^{\gamma}\right)\left( |v-\widetilde{v}| + |u - \widetilde{u}| \right).
 \end{align*}
 The other term is estimated in the same way.
 \\ \textit{(c)} In this case we obtain
 \begin{align*}
 &\ |v-u| | \sigma(|v-u|)\beta(r-q) - \sigma(|\widetilde{v} - \widetilde{u}|)\beta(\widetilde{r} - \widetilde{q})| 
   \\ &\leq |v-u|\left( \sigma(|v-u|)|\beta(r-q) - \beta(\widetilde{r} - \widetilde{q})| + \beta(\widetilde{r} - \widetilde{q})|\sigma(|v-u|) - \sigma(|\widetilde{v} - \widetilde{u}|)| \right)
   \\ &\leq C |v-u|^{1 + \gamma}\left( |r - \widetilde{r}| + |q - \widetilde{q}|\right) + C\left( |v - u|^{\gamma} + |\widetilde{v} - \widetilde{u}|^{\gamma}\right)\left( |v-\widetilde{v}| + |u - \widetilde{u}| \right)
 \end{align*}
 and similarly for the second term.
\end{proof}
From this we deduce the following estimates for $\Psi$.
\begin{Lemma}\label{FPE:LEMMA06}
 There exists a constant $C > 0$ such that
 \begin{enumerate}
  \item[(a)] If $\gamma \in (-d,-1]$, then
 \begin{align*}
  \Psi &\leq C\left( |v-u|^{1+\gamma} + |\widetilde{v} - \widetilde{u}|^{1+\gamma}\right)\left(|r-\widetilde{r}| + |q - \widetilde{q}|\right)
  \\ &\ \ \ + C \left( |v-u|^{\gamma} + |\widetilde{v} - \widetilde{u}|^{\gamma}\right)\left( |v-\widetilde{v}| + |u - \widetilde{u}|\right).
 \end{align*}
 \item[(b)] If $\gamma \in (-1,0)$, then
 \begin{align*}
  \Psi &\leq C\left( \langle v \rangle^{1+ \gamma} + \langle u \rangle^{1+ \gamma} + \langle \widetilde{v}\rangle^{1+\gamma} + \langle \widetilde{u}\rangle^{1+\gamma}\right)\left(|r-\widetilde{r}| + |q - \widetilde{q}|\right)
  \\ &\ \ \ + C \left( |v-u|^{\gamma} + |\widetilde{v} - \widetilde{u}|^{\gamma}\right)\left( |v-\widetilde{v}| + |u - \widetilde{u}|\right)
 \end{align*}
 \end{enumerate}
\end{Lemma}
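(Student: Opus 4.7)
The plan is to obtain Lemma \ref{FPE:LEMMA06} as an essentially direct combination of Lemma \ref{FPE:LEMMA04} with a clean estimate for the minimum appearing in the second summand of $\Psi$. Recall that
\[
 \Psi = \left( |v-u| + |\widetilde{v} - \widetilde{u}| \right)\left| \sigma(|v-u|)\beta(r-q) - \sigma(|\widetilde{v}-\widetilde{u}|)\beta(\widetilde{r}-\widetilde{q}) \right| + \left( |v-\widetilde{v}| + |u-\widetilde{u}|\right) \min\{\sigma\beta,\widetilde{\sigma}\widetilde{\beta}\},
\]
so the two summands can be treated separately and then added.

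First, I would bound the first summand of $\Psi$ using Lemma \ref{FPE:LEMMA04}: for part (a) of the lemma to be proved (the case $\gamma \in (-d,-1]$) I invoke Lemma \ref{FPE:LEMMA04}(c), and for part (b) (the case $\gamma \in (-1,0)$) I invoke Lemma \ref{FPE:LEMMA04}(b). This produces the $|v-u|^{1+\gamma}$-type or $\langle v\rangle^{1+\gamma}$-type term in front of $|r-\widetilde{r}| + |q-\widetilde{q}|$, together with a $(|v-u|^{\gamma} + |\widetilde{v}-\widetilde{u}|^{\gamma})(|v-\widetilde{v}| + |u-\widetilde{u}|)$ remainder.

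Next, for the second summand of $\Psi$ I would use the assumption that $\beta$ is bounded by $1$ together with the standing bound $\sigma(|z|) \leq c_{\sigma} |z|^{\gamma}$ for $\gamma \in (-d,0)$. Since $\gamma < 0$, the function $z \mapsto |z|^{\gamma}$ blows up near zero, which is exactly what makes the minimum useful: one has
\[
 \min\{\sigma(|v-u|)\beta(r-q),\, \sigma(|\widetilde{v}-\widetilde{u}|)\beta(\widetilde{r}-\widetilde{q})\} \leq c_{\sigma}\bigl(|v-u|^{\gamma} \wedge |\widetilde{v}-\widetilde{u}|^{\gamma}\bigr) \leq \tfrac{c_{\sigma}}{2}\bigl(|v-u|^{\gamma} + |\widetilde{v}-\widetilde{u}|^{\gamma}\bigr),
\]
so the second summand is absorbed into a term of the form $C(|v-u|^{\gamma} + |\widetilde{v}-\widetilde{u}|^{\gamma})(|v-\widetilde{v}| + |u-\widetilde{u}|)$, precisely the shape appearing in both (a) and (b).

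Adding the two bounds yields the claim in each case. There is no real obstacle here: the lemma is essentially a bookkeeping statement which packages Lemma \ref{FPE:LEMMA04} together with the trivial min-bound into the form needed to apply Proposition \ref{general coupling inequality}. The only minor point to be careful about is keeping the constants uniform in the two regimes of $\gamma$ and checking that the $|v-u|^\gamma$-singularity produced by the min-estimate matches (and is absorbed into) the singular term already supplied by Lemma \ref{FPE:LEMMA04}(b)--(c).
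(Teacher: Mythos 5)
Your proposal matches the paper's proof exactly: both split $\Psi$ into its two summands, apply Lemma \ref{FPE:LEMMA04}(b) or (c) to the first, and bound the minimum in the second by $C\bigl(|v-u|^{\gamma} + |\widetilde{v}-\widetilde{u}|^{\gamma}\bigr)$ using $\beta \leq 1$ and $\sigma(|z|) \leq c_{\sigma}|z|^{\gamma}$. No differences worth noting.
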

\begin{proof}
Lemma \ref{FPE:LEMMA06} is now a consequence of 
\begin{align*}
 &\ \sigma(|v-u|) \wedge \sigma(|\widetilde{v} - \widetilde{u}|) \left( |v- \widetilde{v}| + |u - \widetilde{u}|\right)
 \\ &\qquad \qquad \qquad \leq C \left( |v-u|^{\gamma} + |\widetilde{v} - \widetilde{u}|^{\gamma} \right)\left( |v-\widetilde{v}| + |u - \widetilde{u}|\right)
\end{align*}
and Lemma \ref{FPE:LEMMA04} from the appendix.
\end{proof}
Below we treat the cases $\gamma \in (-d,-1]$ and $\gamma \in (-1,0)$ seperately.

\subsection{Case $\gamma \in (-d,-1]$}
 We obtain from the general coupling inequality together with Lemma \ref{FPE:LEMMA06} and $H_s \in \mathcal{H}(\mu_s,\nu_s)$
 \begin{align*}
  & \ \ \ W_1^t( \mu_t, \nu_t) \leq W_1(\mu_0, \nu_0) 
  \\ &\ \ \  + K\int_{0}^{t}\int_{\R^{8d}} \left( |v-u|^{\gamma} + |\widetilde{v} - \widetilde{u}|^{\gamma}\right)\left( |v-\widetilde{v}| + |u - \widetilde{u}|\right) dH_s(q,u;\widetilde{q},\widetilde{u}) dH_s(r,v;\widetilde{r},\widetilde{v}) ds
  \\ &\ \ \ + K\int_{0}^{t}\int_{\R^{8d}}\left( |v-u|^{1 + \gamma} + |\widetilde{v} - \widetilde{u}|^{1 + \gamma}\right)\left( |r-\widetilde{r}| + |q - \widetilde{q}|\right)dH_s(q,u;\widetilde{q},\widetilde{u})  dH_s(r,v;\widetilde{r},\widetilde{v})ds
 \\ &\leq W_1(\mu_0,\nu_0) 
 +  K \int_{0}^{t}\int_{\R^{4d}} \Lambda(s,\mu,\nu) |v - \widetilde{v}| dH_s(r,v;\widetilde{r},\widetilde{v})
 \\ &\ \ \ + K \int_{0}^{t}\int_{\R^{4d}} \widetilde{\Lambda}(s,\mu,\nu)|r-\widetilde{r}|dH_s(r,v;\widetilde{r},\widetilde{v}) 
  \\ &\leq W_1(\mu_0,\nu_0) + K \int_{0}^{t}\left( \Lambda(s,\mu,\nu) + \widetilde{\Lambda}(s, \mu,\nu)\right)W_1^s(\mu_s,\nu_s)ds,
 \end{align*}
 where we have used $|v - \widetilde{v}| \leq |(r,v) - (\widetilde{r}, \widetilde{v})|_s$ and 
 $|r - \widetilde{r}| \leq (1+T) |(r,v) - (\widetilde{r}, \widetilde{v})|_s$ and
 \[
  \widetilde{\Lambda}(s,\mu,\nu) = \sup \limits_{u \in \R^d}\int_{\R^{2d}} |v-u|^{1+\gamma}d(\mu_s + \nu_s)(r,v).
 \]
Since $1 + \gamma \leq 0$ we obtain  $|v-u|^{1+\gamma} \leq 1 + |v-u|^{\gamma}$ and hence
 $\widetilde{\Lambda}(s,\mu,\nu) \leq 1 + \Lambda(s,\mu,\nu)$ gives
\[
 W_1^t( \mu_t, \nu_t) \leq W_1(\mu_0, \nu_0) + K \int_{0}^{t}\left( 1 + \Lambda(s,\mu,\nu)\right) W_1^s(\mu_s,\nu_s)ds.
\]
The assertion follows from the classical Gronwall lemma.
 
\subsection{Case $\gamma \in (-1,0)$}
Proceeding as before we obtain from the coupling inequality and Lemma \ref{FPE:LEMMA06}
 \begin{align*}
  &\ W_1^t( \mu_t, \nu_t) \leq W_1(\mu_0, \nu_0) 
  \\ &\ \ \ + K\int_{0}^{t}\int_{\R^{8d}} \left( |v-u|^{\gamma} + |\widetilde{v} - \widetilde{u}|^{\gamma}\right)\left( |v-\widetilde{v}| + |u - \widetilde{u}|\right) dH_s(q,u;\widetilde{q},\widetilde{u})  dH_s(r,v;\widetilde{r},\widetilde{v}) ds
  \\ &\ \ \ + K\int_{0}^{t}\int_{\R^{8d}}\left( \langle u \rangle^{1+\gamma} + \langle \widetilde{u} \rangle^{1+\gamma}\right)|r-\widetilde{r}|  dH_s(q,u;\widetilde{q},\widetilde{u})  dH_s(r,v;\widetilde{r},\widetilde{v}) ds
  \\ &\ \ \ + K\int_{0}^{t}\int_{\R^{8d}}\left( \langle v \rangle^{1+\gamma} + \langle \widetilde{v} \rangle^{1+\gamma}\right)|r-\widetilde{r}|   dH_s(r,v;\widetilde{r},\widetilde{v}) ds
  \\ &\leq W_1(\mu_0, \nu_0) + K \int_{0}^{t} \left( 1 + \Lambda(s,\mu,\nu) \right) W_1^s(\mu_s,\nu_s)ds
  \\ &\ \ \ + K \int_{0}^{t}\int_{\R^{4d}}\left( \langle v \rangle^{1+\gamma} + \langle \widetilde{v} \rangle^{1+\gamma}\right)|r- \widetilde{r}| dH_s(r,v;\widetilde{r},\widetilde{v}) ds
  \\ &\leq W_1(\mu_0, \nu_0) + K \mathcal{C}_{\gamma}(T, \mu+\nu,\delta) \int_{0}^{t} W_1^s(\mu_s,\nu_s)(1 + \Lambda(s,\mu,\nu) + | \ln(W_1^s(\mu_s,\nu_s))|)ds,
 \end{align*}
 where in the last inequality we have used similar arguments as in the proof
 of \eqref{ESTIMATE1}.
 This implies the assertion.

\appendix

\section{Some Gronwall inequality}

The following lemma is due to \cite[Lemma 5.2.1, p. 89]{C95}.
\begin{Lemma}\label{UNIQ:LEMMA02}
 Let $\rho$ be a nonnegative bounded function on $[0,T]$, $a \in [0, \infty)$ and $g$ be a strictly positive and non-decreasing function on $(0,\infty)$.
 Suppose that $\int_{0}^{1}\frac{dx}{g(x)} = \infty$ and 
 \[
  \rho(t) \leq a + \int_{0}^{t}g(\rho(s))ds, \ \ t \in [0,T].
 \]
 Then
 \begin{enumerate}
  \item[(a)] If $a = 0$, then $\rho(t) = 0$ for all $t \in [0,T]$.
  \item[(b)] If $a > 0$, then $G(a) - G(\rho(t)) \leq t$ where $G(x) = \int_{x}^{1}\frac{dy}{g(y)}$.
 \end{enumerate}
\end{Lemma}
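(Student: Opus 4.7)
The plan is to prove (b) first by comparing $\rho$ with its natural integral majorant and converting the resulting differential inequality via the change of variables $G$, then derive (a) from (b) by a perturbation-and-limit argument using the divergence hypothesis $\int_0^1 dy/g(y) = \infty$.

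For part (b) the strategy is to introduce
\[
u(t) := a + \int_0^t g(\rho(s)) \, ds, \qquad t \in [0,T],
\]
which by hypothesis satisfies $\rho(t) \leq u(t)$ and is absolutely continuous with $u'(t) = g(\rho(t))$ a.e.\ (since $\rho$ is bounded on $[0,T]$ and $g$ is non-decreasing, $g\circ\rho$ is bounded). Monotonicity of $g$ combined with $\rho \leq u$ yields the key ODE-type inequality $u'(t) \leq g(u(t))$. Because $a > 0$ and $u$ is non-decreasing, $u(t) \geq a > 0$ throughout $[0,T]$, so $G$ is smooth on the range of $u$ with $G'(x) = -1/g(x)$, and therefore
\[
\frac{d}{dt} G(u(t)) \;=\; -\frac{u'(t)}{g(u(t))} \;\geq\; -1
\]
almost everywhere. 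Integrating from $0$ to $t$ gives $G(a) - G(u(t)) \leq t$, and since $G$ is strictly decreasing and $\rho(t) \leq u(t)$, we have $G(\rho(t)) \geq G(u(t))$, yielding $G(a) - G(\rho(t)) \leq t$. (On the set where $\rho(t) = 0$ one uses the convention $G(0) = \lim_{x \to 0^+} G(x) = +\infty$, making the inequality trivially true there.)

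For part (a) the idea is that, although $a = 0$ forbids a direct application of (b), the hypothesis persists upon enlarging $a$. Namely, for any $\varepsilon > 0$ we trivially have $\rho(t) \leq \varepsilon + \int_0^t g(\rho(s))\,ds$, so part (b) with $a$ replaced by $\varepsilon$ gives $G(\varepsilon) - G(\rho(t)) \leq t$ for all $t \in [0,T]$. If $\rho(t_0) > 0$ for some $t_0$, then $G(\rho(t_0))$ is a fixed finite number, whereas the assumption $\int_0^1 dy/g(y) = \infty$ forces $G(\varepsilon) \to +\infty$ as $\varepsilon \to 0^+$, contradicting $G(\varepsilon) - G(\rho(t_0)) \leq t_0$. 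Hence $\rho \equiv 0$ on $[0,T]$.

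The only delicate point is the chain-rule computation for $G \circ u$: one needs absolute continuity of $u$ (which holds because $g \circ \rho$ is bounded on $[0,T]$) together with the strict positivity $u(t) \geq a > 0$ to stay in a region where $G$ is $C^1$ with $G' = -1/g$. Once this is in place the rest is classical separation of variables and monotonicity of $G$, so the argument has no genuine obstacle beyond these regularity bookkeeping steps — the truly essential ingredient is the divergence hypothesis $\int_0^1 dy/g(y) = \infty$, which is exactly what powers the limiting argument in (a).
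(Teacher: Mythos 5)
The paper does not actually prove this lemma: it simply states it in the appendix and cites it to \cite[Lemma~5.2.1, p.~89]{C95}. So there is no in-paper argument to compare against; the comparison can only be against the standard Osgood-type proof that the reference supplies.

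Your proof is correct and is essentially the standard one. For part (b), the majorant $u(t) = a + \int_0^t g(\rho(s))\,ds$ is indeed absolutely continuous (since $g\circ\rho$ is bounded once $\rho$ is bounded and $g$ is non-decreasing, after extending $g$ by $g(0):=\lim_{x\to 0^+}g(x)=0$, which is forced by $\int_0^1 dy/g(y)=\infty$); then $u' = g(\rho)\le g(u)$ a.e.\ by monotonicity, and integrating gives the claim. The one point you rightly flag --- the chain rule for $G\circ u$ when $g$ need not be continuous --- is best handled not via a pointwise chain rule but via the change-of-variables formula for absolutely continuous non-decreasing maps: since $u(0)=a>0$, $u$ is AC and non-decreasing, and $1/g$ is bounded on $[a,\infty)$, one has
\[
G(a)-G(u(t)) \;=\; \int_a^{u(t)}\frac{dy}{g(y)} \;=\; \int_0^t \frac{u'(s)}{g(u(s))}\,ds \;\le\; t,
\]
and $G(\rho(t))\ge G(u(t))$ then yields $G(a)-G(\rho(t))\le t$. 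For part (a), the perturbation $a\mapsto\varepsilon>0$ followed by $\varepsilon\downarrow 0$ and the divergence $G(\varepsilon)\to+\infty$ is exactly the right mechanism and is correct. In short: a valid proof, matching the classical argument the paper implicitly relies on; only the chain-rule step benefits from being phrased through the AC change-of-variables formula rather than as a pointwise derivative identity.
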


\bibliographystyle{amsplain}
\addcontentsline{toc}{section}{\refname}\bibliography{Bibliography}

\end{document}